\newtheorem{theorem}{Theorem}[section]
\newtheorem{lemma}[theorem]{Lemma}
 \newtheorem{assu}[theorem]{Assumption}
 \numberwithin{equation}{section}
\newcommand{\beq}{\begin{equation}}
\newcommand{\eeq}{\end{equation}}
\newcommand{\bR}{\mathbb{R}}
\newcommand{\bN}{\mathbb{N}}
\newcommand{\R}{\bR}
\newcommand{\Rn}{\bR^n}
\newcommand{\cJ}{\mathcal{J}}
\newcommand{\cN}{\mathcal{N}}
\newcommand{\argmin}{\mathop{\mathrm{argmin}}}
\newcommand{\norm}[1]{\left\Vert #1\right\Vert}
\newcommand{\znorm}[1]{\norm{#1}_0}
\newcommand{\ip}[2]{\left\langle #1,\, #2\right\rangle}
\newcommand{\set}[2]{\left\{#1\,\left|\, #2\right.\right\}}
\newcommand{\map}[3]{#1:\, #2\rightarrow #3}
\newcommand{\gam}{\gamma}
\newcommand{\lam}{\lambda}
\newcommand{\eps}{\epsilon}
\newcommand{ \vareps}{\varepsilon}
\newcommand{\sig}{\sigma}
\newcommand{\alf}{\alpha}
\newcommand{\del}{\delta}
\newcommand{\bx}{\bar x}
\newcommand{\tx}{{\tilde x}}
\newcommand{\te}{{\tilde e}}
\newcommand{\hn}{{\hat n}}
\definecolor{newcolor}{rgb}{.8,.349,.1}
\newcommand{\red}[1]{\textcolor{red}{#1}}
\title{IRLS for Sparse Recovery Revisited: \\ Examples of Failure and a Remedy}
\author{Aleksandr Aravkin, James Burke, Daiwei He}
\date{\today}
\begin{document}

\maketitle

\begin{abstract}
Compressed sensing is a central topic in signal processing with myriad applications, where the goal is to recover a signal
from as few observations as possible. Iterative re-weighting is one of the fundamental tools
to achieve this goal.
This paper re-examines 
the iteratively reweighted least squares (IRLS) algorithm for sparse recovery proposed by
Daubechies, Devore, Fornasier, and G\"unt\"urk in 
\emph{Iteratively reweighted least squares minimization for sparse recovery},
{\sf Communications on Pure and Applied Mathematics}, {\bf 63}(2010) 1--38.
Under the null space property of order $K$, 
the authors show that 
their algorithm converges to the unique $k$-sparse solution for $k$ strictly bounded above by a value strictly less than $K$,
and this $k$-sparse solution coincides with the unique $\ell_1$ solution. 
On the other hand, it is known that, for $k$ less than or equal to $K$, the $k$-sparse and $\ell_1$ solutions
are unique and coincide. 
The authors emphasize that their proof method
does not apply for $k$ sufficiently close to $K$,
and remark that they were unsuccessful
in finding an example where the algorithm fails for these values of $k$. 

In this note we construct a family of examples where the Daubechies-Devore-Fornasier-G\"unt\"urk
IRLS algorithm fails for 
$k=K$, 
and provide a modification to their algorithm that provably
converges to the unique $k$-sparse solution for $k$ less than or equal to $K$ while preserving the local linear rate. 
The paper includes numerical studies of this family as well as the modified IRLS algorithm, 
testing their robustness under perturbations and to parameter selection.
\end{abstract}

\section{Introduction}
The fundamental problem in compressed sensing is to recover 
 the sparsest solution $x_*$ to a linear equation of the form $\Phi x=y$ for a given $y$,
 where $\Phi \in \mathbb{R}^{\ell \times N}$ is the measurement matrix and $\ell < N$. 
 We denote the set of solutions to the equation $\Phi x=y$ by $\Phi^{-1}(y)$ 
 which is assumed to be non-empty throughout.
 The problem of obtaining the sparsest solution can be posed as the minimization
 of the so-called $0$-norm, $\znorm{x}$, over $\Phi^{-1}(y)$, where 
 $\znorm{x}$ is the number of non-zero components in the vector $x$.
 Since the $0$-norm problem is NP hard, in practice \cite{chen2001atomic}
 one replaces this problem
 with the 
$\ell_1$ minimization (or basis pursuit) problem 
\begin{equation}
	\min_{x \in \Phi^{-1}(y)} \norm{x}_1 \label{1}\tag{BP}.
\end{equation}
The relationship of \ref{1} to the $0$-norm problem has been intensively studied 
over the past few years 
\cite{candes2004robust,candes2005decoding,  donoho2006compressed, donoho2003optimally}.
Compressed sensing has applications to a range of signal processing areas, 
including image acquisition, sensor networks and image reconstruction 
\cite{chen2001atomic, lustig2005application, takhar2006new}.

Numerous algorithms have been proposed for solving \ref{1} and its various reformulations, which include the basis pursuit denoising (BPDN) problem\red{:}
$$\min_x \{\norm{x}_1 | \norm{\Phi x - y}_2 \leq \sigma\},$$ 
the LASSO problem\red{:}
$\min_x \norm{x}_1 + \frac{\mu}{2} \norm{\Phi x - y}_2^2,$
and  the $\ell_1$-regression problem\red{:}
\begin{equation}\label{pr:l1}\tag{$\ell_1$R}
\min_x \norm{Az - b}_1
\end{equation}
under the correspondences $\text{rge}\,(A) = \text{Null}(\Phi)$ and $\Phi b = y$ \cite{candes2005decoding} (see Section \ref{sec:exp} for details). 
Algorithms designed to solve these problems include
the iteratively reweighted least squares (IRLS) algorithms
\cite{BCWW2015,Lin2009} which apply to $\ell_1$R, the
FISTA algorithm \cite{beck2009fast, wright2009sparse} which applies to the LASSO, 
and the homotopy algorithm \cite{osborne2000new}, the alternating direction method of multipliers (ADMM) \cite{boyd2011distributed, gabay1975dual}, and the level-set method described in  
\cite{ABDFR19} which all apply to BPDN. However, the focus of this paper is
the IRLS algorithm described in \cite{daubechies2010iteratively} which we refer to as the DDFG-IRLS algorithm. 

In \cite{daubechies2010iteratively}, the authors show that if the matrix $\Phi$ satisfies the the \emph{null space property
of order $K$ for $0<\gam<1$} (see Section \ref{sec:conv} for details), then the DDFG-IRLS algorithm 
converges to the unique $k$-sparse solution when $k< K-2\gam(1-\gam)^{-1}$,
and this $k$-sparse solution coincides with the unique $\ell_1$ solution, where a
vector is $k$-sparse if it has $k$ nonzero components. 
In addition, the authors also establish the local linear convergence of the DDFG-IRLS algorithm
when $0<\gam<1-2/(K+2)$.
On the other hand, it is known that for $k\le K$ the $k$-sparse and $\ell_1$ solutions
are unique and coincide \cite{gribonval2003sparse, donoho1989uncertainty, daubechies2010iteratively}.
In \cite[Remark 5.4]{daubechies2010iteratively}, the authors note that
their proof method
does not apply for $K-2\gam(1-\gam)^{-1}\le k \le K$, and state that
they were unsuccessful
in finding an example where the algorithm fails when $k$ falls in this range.
In this note we construct a family of examples where the DDFG-IRLS algorithm
fails when $k=K$, and provide a modification to their algorithm that provably
converges to the unique $k$-sparse solution for $k\le K$. In addition, we show that this modification
is locally linearly convergent for all $k\le K$ and $\gam\in (0,1)$ which increases the range
of $\gam$ values for which linear convergence is assured.

Iteratively re-weighted least squares algorithms (IRLS) for solving $\ell_p$ minimization problems for $1\le p\le \infty$
have been in the literature for many years beginning with the Ph.D. thesis of Lawson \cite{lawson1961contribution}.
For $0<p\le1$, 
IRLS was used to solve sparse reconstruction in \cite{gorodnitsky1997sparse}, 
and a theory for solving $\ell_p$ minimization problems in general can be found in \cite{osborne1985finite}. 
We refer the reader to \cite{oleary1990} for a survey on IRLS methods applied to robust regression.
More recently, cluster point convergence of IRLS smoothing methods for problems of the form $\min f(x)+\lam \norm{x}_0$, where 
$\map{f}{\Rn}{\R\cup\{+\infty\}}$, is given in \cite{Lin2009}. 
In addition, an IRLS algorithm has been developed
for convex inclusions of the form $A_ix+b_i\in C_i,\ i=1,\dots,n$ where the sets $C_i$ are all assumed to be convex
\cite{BCWW2015}. In this case, the authors establish the iteration complexity of their method. 
However, all of these methods focus on
general linear systems and do not specifically address the problem of compressed sensing where the  
null space properties play a key role. 
Daubechies, Devore, Fornasier, and G\"unt\"urk \cite{daubechies2010iteratively}
focus on the compressed sensing case where $\norm{x}_0$ is approximated by a smoothing of the
norms $\norm{x}_p$ for $0<p\le1$. We follow 
Daubechies, Devore, Fornasier, and G\"unt\"urk in the $p=1$ case
and suggest a simple modification to their method for updating the smoothing parameter. This modification allows us to
obtain stronger convergence properties.

Our discussion proceeds as follows. In Section 2 we discuss the DDFG-IRLS algorithm
and our modification to the smoothing parameter update procedure. In Section 3 we prove the stronger
convergence and rate of convergence properties for the modified algorithm. 
Our proofs closely parallel those given in \cite{daubechies2010iteratively}
but contain some simplifications. In Section 4, we construct a family of examples where the 
DDFG-IRLS algorithm \red{fails} but our modifications succeed. These results are illustrated numerically in Section 5
where
we also provide a few numerical experiments to illustrate the numerical stability of the modified algorithm.
In particular, we show that on randomly chosen problems the two methods have virtually identical
performance characteristics.

\section{The Modified IRLS Algorithm}
Our algorithm is similar to the IRLS algorithm given in 
\cite{daubechies2010iteratively}. 
The primary innovation is the manner in
which the smoothing parameter $\eps_k$ is is updated. In \cite{daubechies2010iteratively},
$\eps_k$ is updated by the rule 
\[
\epsilon_{k+1} = \min\left\{\epsilon_k, \frac{r_{K+1}(x^{k+1})}{N}\right\},
\]
where, for $x = (x_1, ..., x_N)^T\in\R^N$, 
\[
\mbox{$r_i(x)$ is the $i$th largest element of 
$\{|x_j|| 1 \leq j \leq N\}$. }
\]
On the other hand, the algorithm below employs the update rule
\begin{equation}\label{eq:eps update}
\epsilon_{k+1} = \min\left\{\epsilon_k, \frac{\eta(1-\gamma) \sigma_K(x^{k+1})}{N}\right\},
\end{equation}
where $\eta\in(0,1)$ is chosen and fixed at the beginning of the iteration,
the parameters $\gam$ and $K$ come from A2, and 
\begin{equation}\label{eq:sig}
\sigma_j(z) := \sum_{\nu > j}r_\nu(z),\ \ j=1,\dots,N.
\end{equation}

As stated, the algorithm is an iteratively re-weighted least squares algorithm 
where the weights at each iteration are given by
\begin{equation}\label{eq:weights}
w^k_i := ((x_i^k)^2 + \eps_k^2)^{-1/2}\quad i=1,\dots,N.
\end{equation}
Moreover, given a positive weight vector $w\in\R^N_{++}$, we define the 
associated inner product by
\[
\ip{u}{v}_w := \sum_{i=1}^N w_i u_i v_i\quad\forall\, u,v\in\R^N,
\]
and the corresponding weighted 2-norm by 
$\norm{u}_w := \sqrt{\ip{u}{u}_w}$. With this notation, our algorithm can be stated
as follows. 
\bigskip



\begin{algorithm}[h!]\label{al:1}
\DontPrintSemicolon
\SetAlgoLined
\SetKwInOut{Input}{Input}\SetKwInOut{Output}{Output}
\Input{$x^0\in\R^N$ \\ Initialize $\eps_0 = 1$ and $\eta \in (0, 1)$}
\BlankLine
 
\While{not converge}{
$w^k_i \quad\! \leftarrow ((x_i^k)^2 + \eps_k^2)^{-1/2}\quad i=1,\dots,N.$\;
    $x^{k+1} \leftarrow \argmin \set{\norm{x}^2_{w^k}}{x \in \Phi^{-1}(y)}.$\;
%
    $\epsilon_{k+1} \ \leftarrow \min\left\{\epsilon_k, \frac{\eta(1-\gamma) \sigma_K(x^{k+1})}{N}\right\}$.\;
    If $\epsilon_{k+1} = 0$, stop.\;
    $k \leftarrow k+1$.
}
\Output{$x^{k+1}$}
\caption{An IRLS algorithm for compressed sensing.} 
\label{algorithm_cs}
\end{algorithm}
\medskip

In general, the null space parameters $K$ and $\gamma$ are unknown, however, we show in Section \ref{sec:num} that the performance of both algorithms is robust with respect to their choice.
In particular, by taking $K=N/2$ and $\gam=.9$, the algorithms DDFG-IRLS and Algorithm \ref{algorithm_cs}
perform essentially the same in successfully solving the BP problem.

\section{Convergence}\label{sec:conv}
We follow the proof strategy given in \cite{daubechies2010iteratively} for establishing the convergence
and rate of convergence of Algorithm \ref{algorithm_cs}.
Given $\eps>0$, consider the smoothed $\ell_1$ objective
\[
J(x, \eps) := \sum_{i=1}^n \sqrt{x_i^2 + \eps^2}.
\]
Since $\eps>0$, the function $J(x, \eps)$ is strictly convex in $x$. 
Hence, the minimizer in $x$ over any convex set is unique if it exists. 
For each $\eps\ge 0$, set
\[
x^\epsilon = \argmin_{x \in \Phi^{-1}(y)} J(x, \eps).
\]
The smoothing function $J(x, \eps)$ is used to measure the progress of the iteratively re-weighted 
iterates. For this we require that $\Phi$ satisfies the null space property NSP.
%
\begin{assu}\cite[Section 3]{cohen2009compressed}\label{as:nsp}
	\textbf{Null Space Property (NSP)} A matrix $\Phi \in \mathbb{R}^{\ell \times N}$ satisfies NSP of order $K$ for $\gamma \in (0,1)$ if and only if 
	\begin{equation}
		\norm{z_T}_1 \leq \gamma \norm{z_{T^c}}_1 \quad\forall\, z \in \text{Null}(\Phi)\label{3}
	\end{equation}
	and for all index sets $T\subset\{1,\dots,N\}$ of cardinality not exceeding $K$.
\end{assu}
\smallskip

\noindent
Observe that since \eqref{3} holds 
for all index sets $T\subset\{1,\dots,N\}$ of cardinality $K$, we must
have $K< N/2$. 
The null space property is intimately connected to the $k$-sparsity of solutions to the basis pursuit problem \ref{1}.
\begin{lemma}[NSP $+\ K$-sparsity imply uniqueness]
\cite[Lemma 4.3]{daubechies2010iteratively}\label{lm_cs_1}
Assume A2 holds and $\Phi^{-1}(y)$ contains an $K$-sparse vector $x^*$. 
Then $x^*$ is the unique $\ell_1$-minimizer in $\Phi^{-1}(y)$ and for all $v \in \Phi^{-1}(y)$, 
	\[
		\norm{v- x^*}_1 \leq 2\frac{1+\gamma}{1-\gamma} \sigma_L(v).
	\]
\end{lemma}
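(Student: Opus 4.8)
The plan is to run the classical Cohen--Dahmen--DeVore argument behind \cite[Lemma 4.3]{daubechies2010iteratively}: split every vector over a carefully chosen index set of cardinality at most $K$ and feed that split into the null space property \eqref{3}. Throughout, set $T:=\supp{x^*}$ (so $|T|\le K$), and for $v\in\Phi^{-1}(y)$ put $z:=x^*-v\in\text{Null}(\Phi)$.

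First I would establish that $x^*$ is the unique $\ell_1$-minimizer. Fix $v\ne x^*$, so $z\ne 0$. Since $x^*_{T^c}=0$ we have $v_{T^c}=-z_{T^c}$, and hence
\[
\norm{v}_1=\norm{x^*_T-z_T}_1+\norm{z_{T^c}}_1\ \ge\ \norm{x^*}_1-\norm{z_T}_1+\norm{z_{T^c}}_1 .
\]
Applying \eqref{3} to the set $T$ gives $\norm{z_T}_1\le\gamma\norm{z_{T^c}}_1$, so $\norm{v}_1\ge\norm{x^*}_1+(1-\gamma)\norm{z_{T^c}}_1\ge\norm{x^*}_1$. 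Since $1-\gamma>0$, equality would force $z_{T^c}=0$, and then \eqref{3} would force $z_T=0$ as well, contradicting $z\ne0$; thus $\norm{v}_1>\norm{x^*}_1$, which yields both the minimality and the uniqueness of $x^*$.

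For the error estimate I would \emph{not} split over $\supp{x^*}$ but instead let $S$ index the $L$ largest-magnitude entries of $v$, with $L\le K$ so that \eqref{3} still applies to $S$; then \eqref{eq:sig} gives $\sigma_L(v)=\norm{v_{S^c}}_1=\min_{|\Lambda|\le L}\norm{v_{\Lambda^c}}_1$. Combining $\norm{x^*}_1\le\norm{v}_1$ with $x^*=v+z$ and the triangle inequality on the $S/S^c$ decomposition yields
\[
\norm{v_S}_1+\norm{v_{S^c}}_1\ \ge\ \norm{x^*}_1\ \ge\ \norm{v_S}_1-\norm{z_S}_1+\norm{z_{S^c}}_1-\norm{v_{S^c}}_1 ,
\]
which rearranges to $\norm{z_{S^c}}_1\le\norm{z_S}_1+2\sigma_L(v)$. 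Applying \eqref{3} to $S$ ($\norm{z_S}_1\le\gamma\norm{z_{S^c}}_1$) then gives $(1-\gamma)\norm{z_{S^c}}_1\le 2\sigma_L(v)$, and therefore
\[
\norm{v-x^*}_1=\norm{z_S}_1+\norm{z_{S^c}}_1\le(1+\gamma)\norm{z_{S^c}}_1\le\frac{2(1+\gamma)}{1-\gamma}\,\sigma_L(v),
\]
which is the claimed bound.

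None of these inequalities is deep, so there is no real analytic obstacle; the one genuinely load-bearing choice is the splitting set. Splitting over $\supp{x^*}$ suffices for uniqueness but never brings the best-$L$-term error $\sigma_L(v)$ into play, whereas taking $S$ to be the top-$L$ support of $v$ is exactly what produces $\sigma_L(v)$ with the sharp constant $2(1+\gamma)/(1-\gamma)$. The only other item needing a line of care is the elementary fact implicit in \eqref{eq:sig} that deleting the $L$ largest coordinates of $v$ minimizes $\norm{v_{\Lambda^c}}_1$ over $|\Lambda|\le L$. Since the whole argument is standard, one could alternatively just invoke \cite[Lemma 4.3]{daubechies2010iteratively} and \cite[Section 3]{cohen2009compressed}; we reproduce it to keep the note self-contained.
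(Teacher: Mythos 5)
Your proof is correct. The paper itself gives no proof of this lemma---it is imported verbatim from \cite[Lemma 4.3]{daubechies2010iteratively}---so there is no internal argument to compare against; what you have written is the standard Cohen--Dahmen--DeVore argument underlying the cited result, and every step checks out, including the one genuinely load-bearing choice you flag: splitting over the top-$L$ support of $v$ rather than over $\supp{x^*}$, which is what brings $\sigma_L(v)$ into the bound with the constant $2(1+\gamma)/(1-\gamma)$. Two remarks. First, the $L$ in the displayed bound is a typo for $K$ (the NSP order); your convention ``any $L\le K$, in particular $L=K$'' handles this correctly. Second, the paper also states Lemma \ref{lm_4} (= \cite[Lemma 4.2]{daubechies2010iteratively}), from which the present lemma follows in two lines: taking $z=x^*$ and $z'=v$ there gives $0\le\norm{v-x^*}_1\le\tfrac{1+\gamma}{1-\gamma}\left(\norm{v}_1-\norm{x^*}_1\right)$ because $\sigma_K(x^*)=0$, which yields minimality and uniqueness, and then taking $z=v$, $z'=x^*$ gives $\norm{x^*-v}_1\le\tfrac{1+\gamma}{1-\gamma}\left(\norm{x^*}_1-\norm{v}_1+2\sigma_K(v)\right)\le 2\tfrac{1+\gamma}{1-\gamma}\sigma_K(v)$. (Beware that the fraction in \eqref{eq:lm_4} is printed as $\tfrac{1-\gamma}{1+\gamma}$; the correct constant, as in the original, is $\tfrac{1+\gamma}{1-\gamma}$.) Your self-contained derivation is equivalent: your two displayed inequalities are exactly the proof of that lemma specialized to the case where one of the two vectors is $K$-sparse.
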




We now show that the null space property guarantees the boundedness of any sequence generated by Algorithm \ref{algorithm_cs}.

\begin{lemma}[Boundness of $\{x^n\}$]\label{lem:bound}
Let Assumption \ref{as:nsp} hold, and suppose $\{x^n\}$ is a sequence generated by Algorithm
\ref{algorithm_cs}. 
Then the sequence $\{J(x^n, \eps_n)\}$ is non-increasing, $\norm{x^n}_1 \leq J(x^0, \eps_0)$,
for all $n\in\bN$, and $\sum_{i=1}^\infty \norm{x^{n+1} - x^n}^2_{w^n} < \infty$.
\end{lemma}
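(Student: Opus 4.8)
The plan is to exploit the two defining properties of the iteration: the least-squares subproblem produces the $\norm{\cdot}_{w^n}$-minimizer over the affine set $\Phi^{-1}(y)$, and the $\eps_n$ sequence is non-increasing by construction of the update rule \eqref{eq:eps update}. First I would record the elementary pointwise bound $\sqrt{t^2+\eps^2}\le \frac{t^2+2\eps^2}{2\eps}$ (equivalently $2\eps\sqrt{t^2+\eps^2}\le t^2+2\eps^2$, which follows from $(|t|-\eps)^2\ge 0$ after rearranging, or simply from AM--GM applied to $\sqrt{t^2+\eps^2}$ paired with $\eps$), together with the reverse bound $\sqrt{t^2+\eps^2}\ge |t|$. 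Summing the reverse bound over coordinates gives $\norm{x}_1\le J(x,\eps)$ for every $x$ and $\eps>0$, so once monotonicity of $\{J(x^n,\eps_n)\}$ is in hand we immediately get $\norm{x^n}_1\le J(x^n,\eps_n)\le J(x^0,\eps_0)$.

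The core step is the monotonicity of $\{J(x^n,\eps_n)\}$. I would argue in two moves. Move one: since $\eps_{n+1}\le\eps_n$, we have $J(x^{n+1},\eps_{n+1})\le J(x^{n+1},\eps_n)$ because $\eps\mapsto\sqrt{t^2+\eps^2}$ is nondecreasing in $\eps\ge 0$ for each fixed $t$. Move two: compare $J(x^{n+1},\eps_n)$ with $J(x^n,\eps_n)$ using that $x^{n+1}$ minimizes $\norm{\cdot}_{w^n}^2$ over $\Phi^{-1}(y)$ while $x^n$ is feasible. Write $J(x,\eps_n)$ against the quadratic model $Q(x):=\frac12\norm{x}_{w^n}^2+\frac12\sum_i w_i^n\eps_n^2 = \frac12\sum_i \big((x_i)^2+\eps_n^2\big)\big((x_i^n)^2+\eps_n^2\big)^{-1/2}$. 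By the pointwise inequality above applied with $t=x_i$, $\eps=\sqrt{(x_i^n)^2+\eps_n^2}$ (so that $\sqrt{x_i^2+(x_i^n)^2+\eps_n^2}$ is dominated), one shows $\sqrt{x_i^2+\eps_n^2}\le \frac12\big(x_i^2+\eps_n^2\big)\big((x_i^n)^2+\eps_n^2\big)^{-1/2}+\frac12\big((x_i^n)^2+\eps_n^2\big)^{1/2}$, i.e. $J(x,\eps_n)\le Q(x)$ with equality at $x=x^n$. Hence
\[
J(x^{n+1},\eps_n)\le Q(x^{n+1})\le Q(x^n)=J(x^n,\eps_n),
\]
the middle inequality because $x^{n+1}$ minimizes $\norm{\cdot}_{w^n}^2$ and $Q$ differs from $\frac12\norm{\cdot}_{w^n}^2$ only by the additive constant $\frac12\sum_i w_i^n\eps_n^2$. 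Chaining with move one gives $J(x^{n+1},\eps_{n+1})\le J(x^n,\eps_n)$.

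For the summability $\sum_n\norm{x^{n+1}-x^n}_{w^n}^2<\infty$, I would sharpen the $Q(x^{n+1})\le Q(x^n)$ step. Since $Q$ is a quadratic whose Hessian in the $w^n$-weighting is the identity (up to the factor $\frac12$) and $x^{n+1}$ is its unconstrained-over-$\Phi^{-1}(y)$ minimizer, the optimality of $x^{n+1}$ over the affine set yields the exact identity $Q(x^n)-Q(x^{n+1})=\frac12\norm{x^n-x^{n+1}}_{w^n}^2$ (the Pythagoras/projection identity: $x^n-x^{n+1}\in \mathrm{Null}(\Phi)$ is $w^n$-orthogonal to the residual, which is zero here since the constraint is affine and the objective is a pure weighted norm). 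Combining with $J(x^{n+1},\eps_n)\le Q(x^{n+1})$ and $Q(x^n)=J(x^n,\eps_n)$,
\[
\tfrac12\norm{x^n-x^{n+1}}_{w^n}^2 = Q(x^n)-Q(x^{n+1})\le J(x^n,\eps_n)-J(x^{n+1},\eps_n)\le J(x^n,\eps_n)-J(x^{n+1},\eps_{n+1}).
\]
Summing over $n$ telescopes the right-hand side, which is bounded by $J(x^0,\eps_0)-\inf_n J(x^n,\eps_n)\le J(x^0,\eps_0)$ since $J\ge 0$, giving $\sum_n\norm{x^{n+1}-x^n}_{w^n}^2\le 2J(x^0,\eps_0)<\infty$.

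The main obstacle is getting the quadratic majorant $Q$ and the exact descent identity exactly right — in particular making sure the constant $\frac12\sum_i w_i^n\eps_n^2$ is handled consistently so that $Q$ and $\frac12\norm{\cdot}_{w^n}^2$ genuinely share the same minimizer $x^{n+1}$, and verifying the tangency $J(\cdot,\eps_n)\le Q$ with equality at $x^n$ rather than merely an inequality. Everything else is the AM--GM bound and telescoping. One caveat: the argument as written assumes $\eps_n>0$ for the weights to be well defined; if the algorithm's stopping test $\eps_{k+1}=0$ triggers, the sequence is finite and the conclusions hold trivially, so I would dispatch that case at the outset.
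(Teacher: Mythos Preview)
Your proposal is correct and is essentially the paper's own argument: the majorant $Q(x)=\tfrac12\|x\|_{w^n}^2+\tfrac12\sum_i w_i^n\eps_n^2$ with tangency at $x^n$ is exactly the concavity inequality $\sqrt{a}\le\sqrt{b}+\tfrac{1}{2\sqrt{b}}(a-b)$ the paper invokes, and your Pythagoras identity $Q(x^n)-Q(x^{n+1})=\tfrac12\|x^n-x^{n+1}\|_{w^n}^2$ is precisely their ``complete the square plus optimality condition'' step \eqref{12}--\eqref{13}. One small slip: deriving the majorant by substituting $t=x_i$, $\eps=\sqrt{(x_i^n)^2+\eps_n^2}$ into your first elementary bound does not give the inequality you then (correctly) write down with equality at $x^n$; just cite concavity of $\sqrt{\cdot}$ directly with $a=x_i^2+\eps_n^2$, $b=(x_i^n)^2+\eps_n^2$.
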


\begin{proof}
By concavity of the square root function $\sqrt{b}+\frac{1}{2\sqrt{b}}(a-b)\ge\sqrt{a}$ for $0\le a,b$, and so
\begin{equation}\label{11}
	J(x^{n+1}, \eps_n) - J(x^n, \eps_n) \leq \frac{1}{2}(\norm{x^{n+1}}_{w^n}^2 - \norm{x^n}_{w^n}^2).
\end{equation}
By completing the square and rearranging terms, we have
\begin{equation}\label{12}
\begin{aligned}
\norm{x^{n+1}}_{w^n}^2 - \norm{x^n}_{w^n}^2& = -\norm{x^{n+1} - x^n}_{w^n}^2 \\
& + 2\ip{x^{n+1}}{x^{n+1} - x^n}_{w^n}.
\end{aligned}
\end{equation}
Since $x^{n+1} = \argmin_{x \in \Phi^{-1}(y)} \norm{x}_{w^n}$, we know
\begin{equation}\label{13}
\ip{x^{n+1}}{x^{n+1} - x^n}_{w^n} = 0.
\end{equation}
By combining \eqref{11}, \eqref{12} and \eqref{13} and using the fact that $\{\eps_n\}$ is non-increasing, we have
\[
\begin{aligned}
	J(x^{n+1},\eps_{n+1}) - J(x^n, \eps_n) &\leq J(x^{n+1}, \eps_n) -  J(x^{n}, \eps_n) \\
	& \leq -\frac{1}{2}\norm{x^{n+1} - x^n}_{w^n}^2.
\end{aligned}
\]
Hence $\norm{x^n}_1 \leq J(x^{n}, \eps_n) \leq J(x^0, \eps_0)$. Moreover, by telescoping we know 
\[\sum_{n=1}^\infty \norm{x^{n+1} - x^n}_{w^n}^2 \leq 2J(x^0, \eps_0) < \infty.\]
\end{proof}

\noindent
Our convergence proof also relies on the following lemma.
\begin{lemma}\cite[Lemma 4.2]{daubechies2010iteratively}\label{lm_4}
Let Assumption \ref{as:nsp} hold. Then, for any $z, z^\prime\in\Phi^{-1}(y)$, we have
\begin{equation}\label{eq:lm_4}
\norm{z - z^\prime}_1 \leq \frac{1-\gamma}{1+\gamma} 
\left[\,\norm{z^\prime}_1 - \norm{z}_1 + 2\sigma_K(z)\right],
\end{equation}	
where $\sigma_K$ is defined in \eqref{eq:sig}.	
\end{lemma}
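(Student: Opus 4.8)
The plan is to reduce everything to a single block comparison driven by the null–space property. Since $z,z'\in\Phi^{-1}(y)$, the difference $v:=z-z'$ satisfies $\Phi v=0$, i.e. $v\in\text{Null}(\Phi)$, so Assumption~\ref{as:nsp} applies to $v$ for every index set of size at most $K$. I would let $T\subset\{1,\dots,N\}$ collect the $K$ coordinates of $z$ of largest magnitude, so that $|T|=K$, $\norm{z_{T^c}}_1=\sigma_K(z)$, and $\norm{z_T}_1=\norm{z}_1-\sigma_K(z)$. The entire lemma is then a comparison of $\norm{z'}_1=\norm{z-v}_1$ against $\norm{z}_1$, performed separately on $T$ and $T^c$ and recombined through the ratio inequality $\norm{v_T}_1\le\gamma\norm{v_{T^c}}_1$.

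Concretely, I would split $\norm{z-v}_1=\norm{z_T-v_T}_1+\norm{z_{T^c}-v_{T^c}}_1$ and apply the reverse triangle inequality on each block, namely $\norm{z_T-v_T}_1\ge\norm{z_T}_1-\norm{v_T}_1$ and $\norm{z_{T^c}-v_{T^c}}_1\ge\norm{v_{T^c}}_1-\norm{z_{T^c}}_1$. Substituting the two identities for $\norm{z_T}_1$ and $\norm{z_{T^c}}_1$ and rearranging gives the clean intermediate estimate
\[
\norm{v_{T^c}}_1-\norm{v_T}_1\ \le\ \norm{z'}_1-\norm{z}_1+2\sigma_K(z).
\]
Now the NSP enters on both sides: on the left $\norm{v_T}_1\le\gamma\norm{v_{T^c}}_1$ yields $\norm{v_{T^c}}_1-\norm{v_T}_1\ge(1-\gamma)\norm{v_{T^c}}_1$, while the full norm reconstructs as $\norm{v}_1=\norm{v_T}_1+\norm{v_{T^c}}_1\le(1+\gamma)\norm{v_{T^c}}_1$. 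Eliminating $\norm{v_{T^c}}_1$ between these two relations is the entire content of the lemma and is routine.

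The main obstacle is not the argument but the precise multiplicative constant it produces. Carrying the elimination through gives $\norm{v}_1\le\tfrac{1+\gamma}{1-\gamma}\bigl[\,\norm{z'}_1-\norm{z}_1+2\sigma_K(z)\bigr]$, i.e. the factor $\tfrac{1+\gamma}{1-\gamma}$ of Lemma~4.2 of \cite{daubechies2010iteratively}, rather than the reciprocal $\tfrac{1-\gamma}{1+\gamma}$ printed in \eqref{eq:lm_4}. The printed factor cannot be correct: taking $\text{Null}(\Phi)=\mathrm{span}\{(1,\dots,1)\}$, which satisfies NSP of order $K$ exactly at $\gamma=K/(N-K)$, together with $z=(1,\dots,1)$ and $z'=0$, makes every inequality above an equality and forces $\norm{z-z'}_1=\tfrac{1+\gamma}{1-\gamma}\bigl[\,\norm{z'}_1-\norm{z}_1+2\sigma_K(z)\bigr]$, with the bracket equal to the strictly positive number $N-2K$. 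Hence no constant below $1$ can hold, and in particular $\tfrac{1-\gamma}{1+\gamma}$ fails. I would therefore read \eqref{eq:lm_4} with the factor $\tfrac{1+\gamma}{1-\gamma}$ and regard the displayed reciprocal as a transcription slip; the block–comparison plan above proves the statement in that (sharp and correct) form, which is also the form actually used downstream in Lemma~\ref{lm_cs_1}.
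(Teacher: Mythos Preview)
Your argument is correct and is exactly the standard proof of \cite[Lemma~4.2]{daubechies2010iteratively}; the present paper does not supply its own proof of Lemma~\ref{lm_4} but merely cites that reference, so there is nothing further to compare.

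You are also right about the constant. The factor displayed in \eqref{eq:lm_4} should be $\tfrac{1+\gamma}{1-\gamma}$, not its reciprocal: your block comparison produces $\tfrac{1+\gamma}{1-\gamma}$, your tightness example with $\text{Null}(\Phi)=\mathrm{span}\{(1,\dots,1)\}$ shows no smaller constant is possible, and Lemma~\ref{lm_cs_1} already carries the correct $2\tfrac{1+\gamma}{1-\gamma}$. One small addendum: the same transcription slip recurs inside the proof of Theorem~\ref{th_cs_1}, where \eqref{eq:lm_4} is invoked with the factor $\tfrac{1-\gamma}{1+\gamma}$; fortunately that application only needs the right-hand side to tend to zero, so the downstream argument survives once the constant is corrected.
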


The main convergence result   
makes use of the following notation:
for $S \subseteq [N]:= \{1,2,3,...,N\}$ and $x \in \mathbb{R}^N$, 
define $x_S \in \mathbb{R}^N$ componentwise by 
\[
(x_S)_i = 
\begin{cases}
x_i,&i\in S,\\
0,&\text{otherwise.}
\end{cases}
\]

\begin{theorem}[Convergence of Algorithm \ref{algorithm_cs}]\label{th_cs_1}
Let Assumption \ref{as:nsp} hold,
and let $y\in \mathbb{R}^m$ and $x_0\in\R^N$ be given. If
$\{x_k\}$ is generated by Algorithm \ref{algorithm_cs} initialized at $x_0$,
then there is an $\bx\in\R^N$ such that $x_k\rightarrow \bx$.
Moreover, the following hold.\\
	(1) If $\epsilon := \lim_{n \rightarrow \infty} \epsilon_n = 0$, then $\bar{x}$ is 
	$K$-sparse in which case $\bar{x}$ is the unique $\ell_1$ - minimizer.\\
	(2) If there exists a $K$-sparse $x^* \in \Phi^{-1}(y)$ , then 
	$\bar{x} = x^*$ is the unique $\ell_1$ - minimizer and $\lim_{n \rightarrow \infty} \epsilon_n = 0$. 
\end{theorem}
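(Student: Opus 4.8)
The plan is to follow the proof strategy of \cite{daubechies2010iteratively}, building on Lemma~\ref{lem:bound} (boundedness and summability), Lemma~\ref{lm_4} (the NSP descent inequality), Lemma~\ref{lm_cs_1} ($K$-sparsity $\Rightarrow$ uniqueness), and the characterization of $x^{k+1}$ as the unique solution of the normal equations $\ip{x^{k+1}}{z}_{w^k}=0$ for all $z\in\text{Null}(\Phi)$. Assume the algorithm does not terminate in finitely many steps; the terminating case is trivial, since then $\sigma_K(x^{k+1})=0$ for some $k$, so $x^{k+1}$ is $K$-sparse and the iterates are eventually constant. First I would extract from Lemma~\ref{lem:bound} a uniform lower bound on the weights: $\sqrt{(x_i^k)^2+\eps_k^2}\le\norm{x^k}_1+\eps_0\le J(x^0,\eps_0)+1$ gives $w_i^k\ge c:=(J(x^0,\eps_0)+1)^{-1}>0$, whence $\norm{x^{k+1}-x^k}_1^2\le (N/c)\,\norm{x^{k+1}-x^k}_{w^k}^2$ is summable and $\norm{x^{k+1}-x^k}_1\to0$. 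Since $\{\eps_k\}$ is non-increasing and nonnegative, $\eps:=\lim_k\eps_k$ exists, and I split the analysis into the cases $\eps>0$ and $\eps=0$.

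If $\eps>0$: for any subsequential limit $\tx$ of $\{x^k\}$, also $x^{k_j+1}\to\tx$ and $w^{k_j}_i\to(\tx_i^2+\eps^2)^{-1/2}$, so passing to the limit in the normal equations yields $\sum_i(\tx_i^2+\eps^2)^{-1/2}\tx_i z_i=0$ for all $z\in\text{Null}(\Phi)$; this is exactly the first-order condition for the strictly convex problem $\min\{J(x,\eps):x\in\Phi^{-1}(y)\}$, so $\tx=x^\eps$ is its unique minimizer. Every subsequential limit therefore equals $x^\eps$ and, the sequence being bounded, $x^k\to x^\eps$. If $\eps=0$: because $\eps_0=1$, the relation $\eps=\min\{1,\inf_{j\ge1}\eta(1-\gamma)\sigma_K(x^j)/N\}=0$ forces $\inf_{j\ge1}\sigma_K(x^j)=0$, so $\sigma_K(x^{k_j+1})\to0$ along a subsequence. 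For every $p>k_j$, Lemma~\ref{lm_4} with $z=x^{k_j+1}$, $z'=x^{p+1}$, combined with $\norm{x^{p+1}}_1\le J(x^{p+1},\eps_{p+1})\le J(x^{k_j+1},\eps_{k_j+1})\le\norm{x^{k_j+1}}_1+N\eps_{k_j+1}$ and $N\eps_{k_j+1}\le\eta(1-\gamma)\sigma_K(x^{k_j+1})$, gives $\norm{x^{k_j+1}-x^{p+1}}_1\le C\,\sigma_K(x^{k_j+1})$ with $C$ depending only on $\gamma,\eta$. Hence $\{x^k\}$ is Cauchy, $x^k\to\bx$, and letting $p\to\infty$ with $\sigma_K$ continuous gives $\sigma_K(\bx)=0$, i.e.\ $\bx$ is $K$-sparse. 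This proves the convergence assertion; item~(1) then follows from Lemma~\ref{lm_cs_1}.

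For item~(2), let $x^*\in\Phi^{-1}(y)$ be $K$-sparse; by Lemma~\ref{lm_cs_1} it is the unique $\ell_1$-minimizer, so it suffices to show $\eps=0$ (item~(1) then forces $\bx=x^*$). Suppose $\eps>0$. By the $\eps>0$ case $x^k\to x^\eps$, and since $\eps_k\searrow\eps>0$ and $\eps_{k+1}=\min\{\eps_k,\eta(1-\gamma)\sigma_K(x^{k+1})/N\}$ one must have $\sigma_K(x^j)\ge N\eps/(\eta(1-\gamma))$ for all $j\ge1$; letting $j\to\infty$, $\sigma_K(x^\eps)\ge N\eps/(\eta(1-\gamma))$. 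On the other hand, write $x^\eps=x^*+v$ with $v\in\text{Null}(\Phi)$ and $T^*:=\mathrm{supp}(x^*)$, $|T^*|\le K$. From the optimality $J(x^\eps,\eps)\le J(x^*,\eps)$, the $1$-Lipschitz bound $|\sqrt{(x^*_i+v_i)^2+\eps^2}-\sqrt{(x^*_i)^2+\eps^2}|\le|v_i|$ for $i\in T^*$, the bound $\sqrt{v_i^2+\eps^2}\ge|v_i|$ for $i\notin T^*$, and the NSP inequality $\norm{v_{T^*}}_1\le\gamma\,\norm{v_{(T^*)^c}}_1$, one obtains $(1-\gamma)\norm{v_{(T^*)^c}}_1\le(N-|T^*|)\eps$; since $\sigma_K(x^\eps)\le\norm{x^\eps_{(T^*)^c}}_1=\norm{v_{(T^*)^c}}_1$, this gives $\sigma_K(x^\eps)\le N\eps/(1-\gamma)$. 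Comparing the two bounds yields $1/\eta\le1$, contradicting $\eta\in(0,1)$; hence $\eps=0$.

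The step I expect to be the crux is this ``$\eps\to0$'' claim of item~(2), namely ruling out a nonzero limiting smoothing parameter when a $K$-sparse solution is present. Its clean proof leans on the precise form of the modified update: the new smoothing value scales with the entire tail $\sigma_K(x^{k+1})$ rather than with the single order statistic $r_{K+1}(x^{k+1})$, so the lower bound $\sigma_K(x^\eps)\ge N\eps/(\eta(1-\gamma))$ produced by a ``stuck'' $\eps$ collides with the NSP/optimality upper bound $\sigma_K(x^\eps)\le N\eps/(1-\gamma)$ exactly because $\eta<1$ --- this is where the parameter $\eta$ is essential. A secondary point worth care is that establishing \emph{full} convergence of $\{x^k\}$ when $\eps>0$ (not merely subsequential convergence) relies on the uniform lower bound on the weights, which guarantees that every subsequential limit is a genuine fixed point of the reweighting map and hence equals $x^\eps$.
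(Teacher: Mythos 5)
Your proof is correct and follows essentially the same route as the paper: the same case split on $\eps=\lim_n\eps_n$, the same reliance on Lemmas \ref{lm_cs_1}, \ref{lem:bound} and \ref{lm_4}, and the same final contradiction driven by $\eta<1$. The only (harmless) variations are that you obtain $x^n\to x^\eps$ by passing to the limit in the normal equations under a uniform lower bound on the weights, rather than via the paper's subgradient/Cauchy--Schwartz estimate, and that your comparison of the two bounds $N\eps/(\eta(1-\gamma))\le\sigma_K(x^\eps)\le N\eps/(1-\gamma)$ lets you divide by $N\eps>0$ directly, sidestepping the paper's separate treatment of the case $\norm{x^\eps_{T^c}}_1=0$.
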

\begin{proof}
Part (1): The proof the part (1) is similar to the proof of \cite[Theorem 5.3(i)]{daubechies2010iteratively}. 
	First observe that $\epsilon$ is well-defined since the sequence $\{\eps_n\}_{n=1}^\infty$ is non-increasing. Moreover, by definition, $\sigma_K(x) = 0$ if and only if $x$ is $K$-sparse. Consequently if for any iteration $n_0$ we have $\epsilon_{n_0 + 1} = 0$, then Algorithm \ref{algorithm_cs} terminates at $x^{n_0}$ with $x^{n_0}$ $K$-sparse, and so 
 part (1) follows from Lemma \ref{lm_cs_1}. Therefore, we assume
 that the algorithm does not terminate and $0< \eps_n \rightarrow 0$. In this case, there 
 must be a subsequence $\cN\subset\bN$ 
 such that $\sigma_K(x^{n}) \overset{\cN}{\rightarrow} 0$. 
 Since Lemma \ref{lem:bound} tells us that the sequnce $\{x^n\}$ is bounded,
 there is a further subsequence $\cN'\subset \cN$ and a point $\bx\in\Phi^{-1}(y)$ such that
 $x^n\overset{\cN'}{\rightarrow}{\bx}$ with $\sigma_K(\bx)=0$. Hence, by Lemma \ref{lm_cs_1},
 $\bx$ is the unique $K$-sparse $\ell_1$-minimizer.
 
 Next let $\cJ\subset\bN$ be any subsequence. Again, by 
 Lemma \ref{lem:bound}, there is a further subsequence $\cJ'\subset\cJ$ and a point
 $x'$ such that $x^n\overset{\cJ'}{\rightarrow}x'$. Let $i\in\cN'$ and $j\in\cJ'$ be such that
 $i<j$. Then
\begin{equation*}
\begin{aligned}
\norm{x^{i} - x^{j}}_1
& \leq \frac{1-\gamma}{1+\gamma}(\norm{x^{j}}_1 - \norm{x^{i}}_1 + 2\sigma_K(x^{i})) 
\quad\mbox{(by \eqref{eq:lm_4})}
\\
& \leq\frac{1-\gamma}{1+\gamma}
          (J(x^{j}, \eps_{j}) - J(x^{i}, \eps_{i})+ N\eps_{i} + 2\sigma_K(x^{i})) 
\\
& \leq  \frac{1-\gamma}{1+\gamma}(N\eps_{i} + 2\sigma_K(x^{i})).
\quad\mbox{(by Lemma \ref{lem:bound})}
	\end{aligned}	
	\end{equation*}
Consequently, $\bx=x'$. Hence the entire sequence $\{x^n\}$ must converge to $\bx$ since 
every subsequence has a further subsequence convergent to $\bx$.
	
\noindent
Part (2):  First we assume $\eps = \inf_n \eps_n=\lim_{n \rightarrow \infty} \eps_n > 0$ and
establish a contradiction.
By Lemma \ref{lem:bound}, every
subsequence $\cN\subset\bN$ has a further subsequence $\cN'\subset\cN$ such
that $x^n\overset{\cN'}{\rightarrow}\tx$ for some $\tx\in\Phi^{-1}(y)$. 
For any $x\in \Phi^{-1}(y)$ and $i\in\cN'$, we have 
\begin{align}
J(x, \eps_{i}) - & J(x^{i}, \eps_{i})  \geq \ip{x^{i}}{x - x^{i}}_{w^{i}} \label{d1}
\\
& = \ip{x^{i+1}}{x - x^{i}}_{w^{i}}+\ip{x^{i}-x^{i + 1}}{x - x^{i}}_{w^{i}}  \nonumber \\
& \geq \ip{x^{i+1}}{x - x^{i}}_{w^{i}}-\norm{x^{i}-x^{i + 1}}_{w^{i}}\norm{x - x^{i}}_{w^{i}},\label{d2}
\end{align}
where \eqref{d1} follows from the convexity of $\sqrt{(\cdot)^2 + \epsilon_{i}^2}$ 
and \eqref{d2} is the Cauchy-Schwartz inequality.
Since $x^{i + 1} = \argmin_{x \in \Phi^{-1}(y)} \norm{x}_{w^{i}}^2$, we have
$\ip{x^{i+1}}{x - x^{i}}_{w^{i}}=0$. 
In addition, since $\eps=\inf_n \eps_n$, we have
$\norm{x - x^{i}}_{w^{i}}\le \eps^{-1}\norm{x - x^{i}}$. By combining these two statements
with \eqref{d2}, we obtain
\[
J(x, \eps_{i}) - J(x^{i}, \eps_{i})\ge -\eps^{-1}\norm{x^{i}-x^{i + 1}}_{w^{i}}\norm{x - x^{i}}.
\]
Since, by Lemma \ref{lem:bound}, $\norm{x^{i}-x^{i + 1}}_{w^{i}}\rightarrow 0$, we find that
\( J(x, \eps)\ge J(\tx, \eps)\). Consequently, $\tx=x^\eps$, that is, every subsequence
of $\{x^n\}$ has a further subsequence convergent to $x^\eps$ which implies that
the entire sequence converges to $x^\eps$.

Now set $T := \{i | x^*_i \neq 0, 1 \leq i \leq N \}$ so that $|T|\le K$,
and observe that
\begin{equation}\label{cs_1}
\norm{x^\epsilon}_1 \leq J(x^\eps, \eps) \leq J(x^*, \eps) \leq \norm{x^*}_1 + N\epsilon .
\end{equation}
In addition, we have
\begin{align}\nonumber 
\norm{x_{T^c}^\epsilon}_1 
& = \norm{x^\epsilon}_1 - \norm{x^\epsilon_{T}}_1 
\\
&\leq \norm{x^*}_1 + N\eps -(\norm{x^*_T}_1 - \norm{x^*_T - x_T^\epsilon}_1) \\ 
& \qquad \text{(by \eqref{cs_1} and $\Delta$ inequality)} \nonumber 
\\
&\le N\eps+\norm{x^*_T - x_T^\epsilon}_1 \\ 
& \qquad(\text{since }\norm{x^*}_1=\norm{x^*_T}_1) \nonumber
\\
& \leq \gamma \norm{x_{T^c}^\epsilon} + N\epsilon \ .\\
& \qquad \mbox{(NSP)} \label{cs_2}
\end{align}
Next observe that 
\[
\begin{aligned}
N\eps = \lim_{n \rightarrow \infty} N\epsilon_n 
&\leq \lim_{n \rightarrow \infty} \eta(1-\gamma)\sigma_K(x^n)  \\
&= \eta(1-\gamma)\sigma_K(x^\eps) \leq \eta(1-\gamma)\norm{x^\epsilon_{T^c}}_1 .
\end{aligned}
\] 
Plugging this into \eqref{cs_2} gives
\begin{equation}\label{eq:ineq for x eps}
\norm{x_{T^c}^\epsilon}_1\le \gamma \norm{x_{T^c}^\epsilon} +\eta(1-\gamma)\norm{x^\epsilon_{T^c}}_1.
\end{equation}
If $\norm{x^\epsilon_{T^c}}_1=0$, then $x^\epsilon=x^*$ and $\sig_K(x^\epsilon)=0$.
But then $\lim_n\sig_K(x^n)=\sig_K(x^\epsilon)=0$ which implies that 
$\eps_n \rightarrow 0$, a contradiction. Therefore, $\norm{x^\epsilon_{T^c}}_1>0$.
Dividing \eqref{eq:ineq for x eps} by $\norm{x^\epsilon_{T^c}}_1$ gives
	\[
		1\le \gam +\eta(1-\gam)<\gam +(1-\gam)=1
		\qquad\mbox{(since $\eta\in (0,1)$)}
	\]
	a contradiction. 
Therefore, $\eps$ must equal zero which returns us to Part (1) and completes the proof.
\end{proof}

\noindent
We now establish the local linear convergence for Algorithm \ref{algorithm_cs}.
Recall that a sequence $\{z^k\}\subset\R^N$ converges 
\emph{locally linearly}
to $z^*\in \R^N$ if there are constants $\kappa\ge 0$ and $\lam\in(0,1)$ and an iteration $k_0\in\bN$
such that 
\[
\norm{z^k-z^*}\le \kappa\lam^{k-k_0}\norm{z^{k_0}-z^*}\quad\forall\, k\ge k_0.
\]
In \cite{daubechies2010iteratively}, the authors refer to linear convergence as 
\emph{exponential convergence}.

\begin{theorem}[The Local Linear Convergence of Algorithm \ref{algorithm_cs}]
\label{th_cs_2}
Let Assumption \ref{as:nsp} hold, and suppose that 
$\Phi^{-1}(y)$ contains a $K$-sparse vector $x^*$. 
Set $T := \{i | x^*_i \neq 0, 1 \leq i \leq N \}$ and choose
$\rho\in \left(0,\, 1-\gamma(1+\eta(1-\gamma))\right)$, where $\gamma$ is given in 
A2 and $\eta\in(0,1)$
is initialized in Algorithm \ref{algorithm_cs}.
Then there is a smallest $n_0\in\bN$ such that 
	\begin{equation}\label{eq:n zero}
		\norm{(x^{n_0} - x^*)_{T^c}}_1 \leq \rho \min_{i \in T}|x^*_i|\ .
	\end{equation}
Moreover, for all $n \geq n_0$, 
	\begin{eqnarray}
		&\norm{(x^{n+1} - x^*)_{T^c}}_1 \leq \mu \norm{(x^n-x^*)_{T^c}}_1,
		\label{eq:gc1}\\
		&\norm{x^n - x^*}_1 \leq (1+\gamma)\mu^{n-n_0} \norm{x^{n_0} - x^*}_1,
		\label{eq:gc1b}
	\end{eqnarray}
	where $\mu := \frac{\gamma(1+\eta(1-\gamma))}{1-\rho} < 1$.
\end{theorem}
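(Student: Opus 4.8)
The plan is to adapt the exponential‑convergence argument of \cite[Theorem~6.1]{daubechies2010iteratively}, streamlined so that it rests only on the first‑order optimality of the weighted least‑squares step, the null space property (Assumption~\ref{as:nsp}), the update rule~\eqref{eq:eps update}, and the convergence already established in Theorem~\ref{th_cs_1}. Throughout write $d^n := x^n - x^* \in \text{Null}(\Phi)$ and $\delta := \min_{i\in T}|x^*_i| > 0$. The existence of $n_0$ is immediate: by Theorem~\ref{th_cs_1}(2) we have $x^n \to x^*$ and $\eps_n \to 0$, so $\norm{d^n_{T^c}}_1 \le \norm{d^n}_1 \to 0$, and since $\rho\delta > 0$ there is a smallest index $n_0$ satisfying \eqref{eq:n zero}. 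The core of the proof is then an induction on $n \ge n_0$ that establishes \eqref{eq:gc1} and simultaneously propagates the bound $\norm{d^n_{T^c}}_1 \le \rho\delta$; the base case $n = n_0$ is the definition of $n_0$, and the inductive step must both prove \eqref{eq:gc1} and re‑establish the bound, which is where $\mu < 1$ gets used inside the loop.

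For the inductive step, assume $\norm{d^n_{T^c}}_1 \le \rho\delta$. By \eqref{3}, $\norm{d^n_T}_1 \le \gamma\norm{d^n_{T^c}}_1 \le \gamma\rho\delta$, so for $i \in T$ we get $|x^n_i| \ge |x^*_i| - |d^n_i| \ge (1-\gamma\rho)|x^*_i| \ge (1-\rho)|x^*_i|$, whence the key bound $w^n_i|x^*_i| \le (1-\rho)^{-1}$. Since $x^{n+1}$ minimizes $\norm{\cdot}_{w^n}^2$ over $\Phi^{-1}(y)$ and $x^* \in \Phi^{-1}(y)$, the optimality condition (cf.\ \eqref{13}) is $\ip{x^{n+1}}{d^{n+1}}_{w^n} = 0$, which rearranges, using $x^*_{T^c} = 0$, to
\[
\norm{d^{n+1}}_{w^n}^2 = -\sum_{i\in T} w^n_i x^*_i d^{n+1}_i \le \sum_{i\in T} w^n_i|x^*_i|\,|d^{n+1}_i| \le \frac{\norm{d^{n+1}_T}_1}{1-\rho} \le \frac{\gamma\,\norm{d^{n+1}_{T^c}}_1}{1-\rho},
\]
the last step again by the null space property. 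Bounding the left side below by Cauchy--Schwarz, $\norm{d^{n+1}}_{w^n}^2 \ge \sum_{i\in T^c} w^n_i (d^{n+1}_i)^2 \ge \norm{d^{n+1}_{T^c}}_1^2 \big/ \sum_{i\in T^c} (w^n_i)^{-1}$, and dividing by $\norm{d^{n+1}_{T^c}}_1$ (the case $\norm{d^{n+1}_{T^c}}_1 = 0$ being trivial),
\[
\norm{d^{n+1}_{T^c}}_1 \le \frac{\gamma}{1-\rho}\sum_{i\in T^c}(w^n_i)^{-1} = \frac{\gamma}{1-\rho}\sum_{i\in T^c}\sqrt{(d^n_i)^2 + \eps_n^2} \le \frac{\gamma}{1-\rho}\bigl(\norm{d^n_{T^c}}_1 + N\eps_n\bigr),
\]
using $x^n_i = d^n_i$ for $i \in T^c$ and $\sqrt{a^2+b^2} \le |a| + |b|$.

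It then remains to feed in the update rule: \eqref{eq:eps update} gives $N\eps_n \le \eta(1-\gamma)\sigma_K(x^n) \le \eta(1-\gamma)\norm{x^n_{T^c}}_1 = \eta(1-\gamma)\norm{d^n_{T^c}}_1$, where the middle step uses that $|T| \le K$ forces $\sigma_K(x^n) = \min_{|S|\le K}\norm{x^n_{S^c}}_1 \le \norm{x^n_{T^c}}_1$. Substituting yields \eqref{eq:gc1} with $\mu = \gamma(1+\eta(1-\gamma))/(1-\rho)$, and $\mu < 1$ is exactly the condition placed on $\rho$; since $\mu < 1$, $\norm{d^{n+1}_{T^c}}_1 \le \norm{d^n_{T^c}}_1 \le \rho\delta$, closing the induction. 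Finally, iterating \eqref{eq:gc1} and using \eqref{3} once more, for $n \ge n_0$,
\[
\norm{x^n - x^*}_1 = \norm{d^n}_1 \le (1+\gamma)\norm{d^n_{T^c}}_1 \le (1+\gamma)\mu^{n-n_0}\norm{d^{n_0}_{T^c}}_1 \le (1+\gamma)\mu^{n-n_0}\norm{x^{n_0} - x^*}_1,
\]
which is \eqref{eq:gc1b}.

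The step I expect to be the main obstacle is the two‑sided control of the weights $w^n$ in the central estimate. On $T$ one needs the \emph{upper} bound $w^n_i \le \bigl((1-\rho)|x^*_i|\bigr)^{-1}$, which is valid only under the current inductive hypothesis $\norm{d^n_{T^c}}_1 \le \rho\delta$, so the induction must be organized so that this hypothesis is re‑verified before the step concludes --- forcing $\mu < 1$ to be part of the loop rather than an afterthought. On $T^c$, by contrast, one can only afford the crude lower bound $(w^n_i)^{-1} = \sqrt{(d^n_i)^2 + \eps_n^2} \le |d^n_i| + \eps_n$, so the whole argument then hinges on $\eps_n$ being dominated by $\sigma_K(x^n) \le \norm{d^n_{T^c}}_1$. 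That domination is precisely what the modified update~\eqref{eq:eps update} supplies: it yields the clean factor $\eta(1-\gamma)$ and hence $\mu < 1$ for \emph{every} $\gamma \in (0,1)$ and $\eta \in (0,1)$, which is the gain over \cite{daubechies2010iteratively}.
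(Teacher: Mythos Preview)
Your proposal is correct and follows essentially the same route as the paper's proof: both use the optimality condition $\ip{x^{n+1}}{x^{n+1}-x^*}_{w^n}=0$ to bound $\norm{d^{n+1}}_{w^n}^2$ by $\tfrac{\gamma}{1-\rho}\norm{d^{n+1}_{T^c}}_1$ via the NSP and the weight bound on $T$, then combine with Cauchy--Schwarz on $T^c$ and the update rule $N\eps_n\le\eta(1-\gamma)\sigma_K(x^n)\le\eta(1-\gamma)\norm{d^n_{T^c}}_1$ to obtain \eqref{eq:gc1}, with the induction organized so that the hypothesis $\norm{d^n_{T^c}}_1\le\rho\delta$ is propagated. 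The only cosmetic difference is that the paper first isolates the one-step estimate for a generic index $\hn$ satisfying \eqref{eq:n zero} and then runs the induction, whereas you fold the two together.
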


\begin{proof}
By Theorem \ref{th_cs_1}, $x^n\rightarrow x^*$ so that for every 
$\rho\in \left(0,\, 1-\gamma(1+\eta(1-\gamma))\right)$ there is a smallest $n_0\in\bN$ such that
\eqref{eq:n zero} holds. Consequently, $n_0$ exists.

We follow the proof in \cite[Theorem 6.1]{daubechies2010iteratively}. 
We prove \eqref{eq:gc1} by induction. Let $\hn \geq n_0$ be such that \eqref{eq:n zero} holds
with $n_0$ replaced by $\hn$.
Since $x^{\hn+1} = \argmin_{x \in \Phi^{-1}(y)} \norm{x}_{w^\hn}^2$, 
the optimality conditions for this problem tell us that
	\[
		\ip{x^{\hn+1}}{x^{\hn+1} - x^*}_{w^\hn} = 0.
	\]
Consequently,
 		\[
		\begin{aligned}
 			\norm{x^{\hn+1}\! -\! x^*}_{w_\hn}^2 & =-\ip{x^*}{x^{\hn+1} \!-\! x^*}_{w_\hn}  \\ 
			&= - \ip{(x^*)_T}{x^{\hn+1}\! -\! x^*}_{w_\hn}\ \\
			& \leq\! \sum_{i \in T} \frac{|x_i^*(x^{\hn+1}_i - x^*_i)|}{\sqrt{(x^\hn_i)^2 + \epsilon_\hn^2}}.
			\end{aligned}
 		\]
 Note, for $i \in T$, NSP tells us that
 \[
 |x^\hn_i - x^*_i| \leq \norm{(x^\hn - x^*)_T}_1 \leq \gamma \norm{(x^\hn - x^*)_{T^c}} 
 \leq \rho \min_{i \in T}|x^*_i|,
 \] 
 we have
 		\[
 			\frac{|x^*_i|}{\sqrt{(x^\hn_i)^2 + \eps_\hn^2}} \leq \frac{|x^*_i|}{|x^\hn_i|} \leq \frac{|x^*_i|}{|x^*_i| - |x^\hn_i - x^*_i|} \leq \frac{1}{1-\rho}.
 		\]
 		Hence
 		\[
		\begin{aligned}
 		 \norm{x^{\hn+1} - x^*}_{w_\hn}^2 &\leq \frac{1}{1-\rho} \norm{(x^{\hn+1} - x^*)_T}_1 \\
		 & \leq \frac{\gamma}{1-\rho} \norm{(x^{\hn+1} - x^*)_{T^c}}_1.
		 \end{aligned}
 		\]
 		Consequently, by Cauchy-Schwartz Inequality,
		\[
 		\begin{aligned}
 	         \|(x^{\hn+1} - &x^*)_{T^c}\|_1^2 
		=\left(\sum_{i \in T^c}\frac{|x^{\hn+1}_i - x^*_i|}{((x_i^\hn)^2 + \eps_\hn^2)^{1/4}}
		((x_i^\hn)^2 + \eps_\hn^2)^{1/4}\right)^2
		\\
		&= \norm{(x^{\hn+1} - x^*)_{T^c}}_{w_\hn}^2\left(\sum_{i \in {T^c}} 
		\sqrt{(x_i^\hn)^2 + \eps_\hn^2}\right) 
		\\
		&\leq \norm{x^{\hn+1} - x^*}_{w_\hn}^2\left(\sum_{i \in {T^c}}|x^\hn_i|+\eps_\hn\right)
		\\
 		& \leq \frac{\gamma\norm{(x^{\hn+1} - x^*)_{T^c}}_1}{1-\rho}  \left[\norm{(x^\hn - x^*)_{T^c}}_1 + N\eps_\hn\right].
 		\end{aligned}
		\]
 		Therefore
 		\begin{align*}
 			\norm{(x^{\hn+1} - x^*)_{T^c}}_1 & \leq \frac{\gamma}{1-\rho} [\norm{(x^\hn - x^*)_{T^c}}_1 + N\eps_\hn] \\
 			& \leq \frac{\gamma}{1-\rho} [\norm{(x^\hn - x^*)_{T^c}}_1 + \eta(1-\gamma) \sigma_K(x^\hn)].\\
			&\qquad \mbox{(Step 4 in Algorithm \ref{algorithm_cs})}
  		\end{align*}
 Observe $\sigma_K(x^\hn) \leq \norm{(x^{\hn})_{T^c}}_1 = \norm{(x^\hn - x^*)_{T^c}}_1$. 
 Hence
\begin{equation}
\label{eq:gc1 goal}
\begin{aligned}
 			\norm{(x^{\hn+1} - x^*)_{T^c}}_1 & \leq \frac{\gamma(1+\eta(1-\gamma))}{1-\rho} \norm{(x^\hn - x^*)_{T^c}}_1\\
			&=\mu\norm{(x^\hn - x^*)_{T^c}}_1.
	\end{aligned}
\end{equation}
Since $n_0$ satisfies \eqref{eq:n zero}, this shows that \eqref{eq:gc1} is satisfied for
$\hn=n_0$.

Now assume \eqref{eq:gc1} holds for $\{n_0, n_0+1, ..., n-1\}$. Then \eqref{eq:gc1} tell us that 
\begin{equation}\label{eq:gc2}
\begin{aligned}
	\norm{(x^{n} - x^*)_{T^c}}_1 & \leq \mu \norm{(x^{n-1} - x^*)_{T^c}}_1 \\
	& \leq ...  \leq \mu^{n-n_0} \norm{(x^{n_0} - x^*)_{T^c}}_1 \\
	&  \leq \rho \min_{i \in T}|x_i^*|,
\end{aligned}
\end{equation} 
where the last inequality is by \eqref{eq:n zero} and $\mu < 1$. 
In particular, we have \eqref{eq:n zero} with $n_0$ replaced by $n$, and so, by 
\eqref{eq:gc1 goal},
\eqref{eq:gc1} is satisfied at $n$ which completes the induction.

 Finally, the NSP for $\Phi$ tells us that
\[
\begin{aligned}
 \norm{x^n - x^*}_1 & \leq (1+\gamma)\norm{(x^n - x^*)_{T^c}}_1 \\ 
 & \leq (1+\gamma)\mu^{n-n_0}\norm{(x^{n_0}-x^*)_{T^c}}_1  \\
 &\leq (1+\gamma)\mu^{n-n_0}\norm{x^{n_0}-x^*}_1.
\end{aligned}
\]
\end{proof}

\section{Failure of DDFG-IRLS}
We construct an example where 
the DDFG-IRLS algorithm provably fails for $K-2\gam/(1-\gam)\le \gam \le K$.
However, we emphasize that, in general,  
the failure of this inequality does not imply the failure of the DDFG-IRLS algorithm.
%

The example is formulated in the context of the $\ell_1$ regression problem $\ell_1$R discussed
in the introduction.
It is well-known that \ref{1} is equivalent to this $\ell_1$ regression problem 
under the corresponces $\text{rge}\,(A) = \text{Null}(\Phi)$ and $\Phi b = -y$ \cite{candes2005decoding}. 
In addition, under these correspondences, 
the NSP for $\Phi$ of order $K$ for 
$\gamma \in (0, 1)$ is equivalent to the following condition on the matrix $A$:
\begin{equation}\label{nsp_l1} 
\norm{(Az)_T}_1 \leq \gamma \norm{(Az)_{T^c}}_1\quad
\text{for all $z$ and all $|T| \leq K$.}
\end{equation}

In terms of the DDFG-IRLS algorithm, when the matrix $A$ has full column rank,
then there is a 1-1 correspondence between the 
iterates of this algorithm and a corresponding IRLS algorithm for solving the \ref{pr:l1}.
If we denote the $i$th row of $A$ by $a_i$, for given $\eps_0$ and $x^0$, 
this correspondence is given by 
\[
x^{n}  = A z^{n} - b \quad \forall\, n=0,1,\dots,
\] 
where, for $n=0,1,\dots$,
\begin{equation}\label{algorithm_example}
\begin{aligned}
\text{DDFG-IRLS}&\left\{
\begin{aligned}
x^{n+1} &:= \min_{x \in \Phi^{-1}(y)} \sum_{i=1}^N \frac{x_i^2}{\sqrt{(x^n_i)^2 + \eps_n^2}} \\\
\epsilon_{n+1} &:= \min\left\{\eps_n, \frac{r_{K+1}(x^{n+1})}{N}\right\}
\end{aligned}
\right.
\\
\mbox{$\ell_1$R-IRLS}
&\left\{
\begin{aligned}
z^{n+1} & := \min_z \sum_{i=1}^N \frac{(a_i^Tz - b_i)^2}{\sqrt{(a_i^Tz^n - b_i)^2 + \epsilon_n^2}} \\
 \epsilon_{n+1} & := \min \left\{ \eps_n, \frac{r_{K+1}(A z^{n+1} - b)}{N}\right\}\, .
\end{aligned}	
\right.
\end{aligned}
\end{equation}
Therefore, by Lemma \ref{lm_cs_1}, 
whenever $\Phi$ satisfies the NSP of order $K$ for $\gam$, 
or equivalently, $A$ satisfies \eqref{nsp_l1}, if  there exists $z^*$ for which
$Az^*-b$ is $K$-sparse, then  $x^*:=Az^*-b$
is the unique solution to \ref{1}. If, in addition, $A$ has full column rank,
then $z^*$ is the unique solution to \ref{pr:l1}.

We now construct our example.
Given $k \geq 1$, 
set $\tilde{A} := (I_k, ..., I_k)^T \in \mathbb{R}^{(2k^2+k) \times k}$ with $2k+1$ blocks of 
the identity $k\times k$ matrix $I_k$. For any $z \in \mathbb{R}^k$ and any $T \subseteq [2k^2 + k]$ with $|T| = k$, let $i_0 \in \set{i }{ |z_i| \geq |z_j| \ \forall\, 1 \leq j \leq k }$. 
Then  
\[
\norm{(\tilde{A}z)_T}_1 \leq k|z_{i_0}| = \frac{k}{k+1} (k+1)|z_{i_0}| \leq \frac{k}{k+1} \norm{(\tilde{A}z)_{T^c}}_1.
\] 
Thus, for $K=k$, $\tilde A$ satisfies \eqref{nsp_l1} with $\gamma = \frac{k}{k+1}$, and
this value for $\gam$ is sharp. 
We now modify $\tilde{A}$ to obtain an $A_\gam$ whose $\gam$ is any
element of $(\frac{k}{k+1},\ 1)$.
To this end, let $\gam\in (\frac{k}{k+1},\ 1)$ and define
$A_\gamma \in \mathbb{R}^{(2k^2 + k) \times k}$ 
so that $A_\gamma(ik+1, 1) := \frac{k+1}{k} \gamma$ for all $0 \leq i \leq k-1$, while 
all other components of $A_\gam$ coincide with those of $\tilde{A}$.
That is, we only replace the $(1,1)$ entry in each of the first $k$ identity matrices $I_k$ 
of $\tilde{A}$ by $\frac{k+1}{k} \gamma$.
By applying the same argument to $A_\gam$ as above for $\tilde A$, we find that
$A_\gam$ satisfies \eqref{nsp_l1} with $K=k$ and $\gamma = \frac{k}{k+1}$,
and this $\gam$ is also sharp.

Next choose $z^* \in \R^k_{++}$. 
Given $\del\in\R$, set $b:= A_\gamma z^* + \delta \te$, where
$\te:=\sum_{j=0}^{k-1}e_{(jk+1)}$ with each $e_{(jk+1)}$ the ${(jk+1)}$th standard unit coordinate vector.
Observe that $x^*:=A_\gam z^*-b$ is $k$-sparse and $A_\gam$ has full column rank.
Hence, by our previous discussion, Lemma \ref{lm_cs_1} implies that
$z^*$ is the unique solution to \ref{pr:l1} for this choice of $A$ and $b$.
Our goal is to show that there is an initialization for the $\ell_1$R-IRLS algorithm in \eqref{algorithm_example}
such that the generated sequence $\{z^n\}$  
satisfies $z^n \nrightarrow z^*$, and 
hence, the corresponding DDFG-IRLS iterates 
$x^n := A_\gamma z^n - b$ do not converge to the unique solution $x^*:= A_\gamma z^* - b $ to
\ref{1}.

\begin{theorem}\label{thm:ex2}
Let $z^*\in\R^k$, $\delta \in \left(0, k(2k+1)\right]$, and $\gamma \in \left[\nu,  1 \right)$, where
\[
\nu:=\sqrt{\frac{1+\frac{1}{4k^2(2k+1)^2}}{1+\frac{1}{k^2(2k+1)^2}}}
=
\sqrt{\frac{4k^2(2k+1)^2 + 1}{4k^2(2k+1)^2 + 4}}.
\]
For these values of $z^*$, $\gam$ and $\del$,
let $A_\gam$ and $b$ be as given above and
consider the problem \ref{pr:l1} having unique solution $z^*$. 
Define 
\[
\alpha := \gamma \frac{k+1}{k}\quad\text{ and }\quad
\xi := \gamma \sqrt{1+\frac{1}{k^2(2k+1)^2}}.
\]
Then
\begin{equation}\label{eq:ex2 ineq}
\begin{aligned}
\alpha >1\, & ,\quad\xi\geq\sqrt{1+(4k^2(2k+1)^2)^{-1}}\ >1\quad\text{and}\\ 
& \gamma/(k(2k+1)\sqrt{\xi^2 - 1})>1.
\end{aligned}
\end{equation}
Initialize $\eps_0 := 1$ and $z^0\in\R^k_{++}$ componentwise by
\begin{equation}\label{eq:x0}
\begin{aligned}
z^0_1  &\in \left(z^*_1 +  \frac{\delta}{\alpha + \gamma/(k(2k+1)\sqrt{\xi^2 - 1})},\ z^*_1 
+  \frac{\delta}{\alpha + 1} \right)\quad \text{and}
\\
z^0_i &:= z^*_i,\, i =2,\dots, k\, . 
\end{aligned}
\end{equation}
If $\{z^n\}$ is the sequence generated by the $\ell_1$R-IRLS algorithm in
\eqref{algorithm_example} with this initialization, then
$z^n \nrightarrow z^*$.
\end{theorem}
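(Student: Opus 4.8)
The plan is to reduce the $\ell_1$R-IRLS iteration in \eqref{algorithm_example} to a scalar recursion for $t_n := z^n_1 - z^*_1$ and then produce a closed interval $[\tau^*,U]$, bounded away from $0$, that is forward-invariant for the pair $(t_n,\epsilon_n)$; as \eqref{eq:x0} puts $t_0$ inside it, we get $t_n\ge\tau^*>0$ for all $n$, hence $z^n\not\to z^*$. The reduction uses that the weighted least-squares subproblem for $z^{n+1}$ is separable across the coordinates of $z$: each row of $A_\gamma$ is $e_r^T$ for some $r$, or $\alpha e_1^T$ with $\alpha:=\gamma(k+1)/k$. For $r\ge 2$ the part of the objective in $z_r$ is a positive multiple of $(z_r-z^*_r)^2$, so $z^0_r=z^*_r$ forces $z^n_r=z^*_r$ for all $n$ by induction; thus $z^n\to z^*$ iff $t_n\to0$. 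Solving the single remaining (scalar) subproblem — $k$ terms with weight $\big((\delta-\alpha t_n)^2+\epsilon_n^2\big)^{-1/2}$ and $k+1$ terms with weight $\big(t_n^2+\epsilon_n^2\big)^{-1/2}$ — yields
\[
t_{n+1}=F_{\epsilon_n}(t_n),\qquad
F_{\bar\epsilon}(t):=\frac{k\alpha\delta}{\,k\alpha^2+(k+1)\sqrt{\tfrac{(\delta-\alpha t)^2+\bar\epsilon^2}{t^2+\bar\epsilon^2}}\,},
\]
with $0<t_n<\delta/\alpha$ and $\epsilon_n>0$ for every $n$. One then computes the residual $A_\gamma z^{n+1}-b$: its entries from the $z_2,\dots,z_k$ rows vanish, and the rest consists of $k$ copies of $\alpha t_{n+1}-\delta$ and $k+1$ copies of $t_{n+1}$. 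Since there are $k+1$ entries of magnitude $t_{n+1}$, the $(K+1)$-st largest magnitude equals $t_{n+1}$ no matter how $|\alpha t_{n+1}-\delta|$ compares with $t_{n+1}$ (here $K=k$, $N=2k^2+k=k(2k+1)$), so the update becomes simply $\epsilon_{n+1}=\min\{\epsilon_n,\,t_{n+1}/N\}$.

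Put $U:=\delta/(\alpha+1)$ and $\tau^*:=\delta/\big(\alpha+(N\sqrt{\xi^2-1})^{-1}\big)$. From \eqref{eq:ex2 ineq} — specifically $\xi>1$ and $\gamma/(N\sqrt{\xi^2-1})>1$ — one gets $0<\tau^*<\delta/\big(\alpha+\gamma/(N\sqrt{\xi^2-1})\big)<U$, so \eqref{eq:x0} places $t_0$ strictly in $(\tau^*,U)$; also $\epsilon_0=1>\tau^*/N$ since $\delta\le N$ and $\alpha>1$. I would then prove by induction the invariant ``$\tau^*\le t_n<U$ and $\epsilon_n\ge\tau^*/N$''. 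Upper bound: $t_n<U$ gives $\delta-\alpha t_n>\gamma t_n>0$, hence $(\delta-\alpha t_n)^2+\epsilon_n^2>\gamma^2(t_n^2+\epsilon_n^2)$ (as $\gamma<1$), so the square root in $F_{\epsilon_n}$ exceeds $\gamma$ and $t_{n+1}<k\alpha\delta/\big(k\alpha^2+(k+1)\gamma\big)=U$ (using $(k+1)\gamma=k\alpha$). Lower bound: $\big((\delta-\alpha t)^2+s\big)/(t^2+s)$ is decreasing in $t$ on $(0,\delta/\alpha)$ and, since $t_n<U$, decreasing in $s=\epsilon_n^2$; hence, by $t_n\ge\tau^*$ and $\epsilon_n^2\ge(\tau^*/N)^2$,
\[
\frac{(\delta-\alpha t_n)^2+\epsilon_n^2}{t_n^2+\epsilon_n^2}\ \le\ \frac{(\delta-\alpha\tau^*)^2+(\tau^*/N)^2}{(\tau^*)^2+(\tau^*/N)^2}\ =\ \frac{\gamma^2}{N^2(\xi^2-1)},
\]
where the last equality is a short computation from $\delta-\alpha\tau^*=\tau^*/(N\sqrt{\xi^2-1})$ and $\xi^2=\gamma^2(1+N^{-2})$. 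Feeding $\sqrt{\cdot}\le\gamma/(N\sqrt{\xi^2-1})$ into $F_{\epsilon_n}$ and simplifying with $(k+1)\gamma=k\alpha$ gives exactly $t_{n+1}\ge\tau^*$, and then $\epsilon_{n+1}=\min\{\epsilon_n,t_{n+1}/N\}\ge\tau^*/N$ closes the induction. So $t_n\ge\tau^*>0$ for all $n$, which proves $z^n\not\to z^*$.

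The delicate points are the residual bookkeeping that produces the clean rule $\epsilon_{n+1}=\min\{\epsilon_n,t_{n+1}/N\}$, and the exact cancellation in the lower-bound step: it is precisely the hypothesis $\gamma\ge\nu$ — equivalently $\xi^2-1\ge(2N)^{-2}$, namely $\xi>1$ with a definite margin — that makes $\tau^*$ a genuine positive fixed point of $t\mapsto F_{t/N}(t)$ and pins the worst-case ratio at $\gamma^2/(N^2(\xi^2-1))$; if $\xi\le1$ the interval $[\tau^*,U]$ degenerates and the iterates in fact converge to $z^*$. (An alternative route: assume $t_n\to0$, take an index $m$ with $t_m$ a running minimum and $t_m<\tau^*$, so $\epsilon_m=t_m/N$; then $F_{t_m/N}(t_m)>t_m$, and since $F_{\epsilon_m}$ is increasing and $\epsilon$ freezes thereafter, $t_\ell$ increases for $\ell\ge m$ to a positive limit — a contradiction.)
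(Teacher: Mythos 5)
Your proof is correct and follows essentially the same route as the paper's: the same separable reduction to a scalar recursion in $t_n=z^n_1-z^*_1$, the same closed-form update map, and an induction yielding the same positive lower bound (your $\tau^*=\delta/(\alpha+(N\sqrt{\xi^2-1})^{-1})$ is exactly the paper's limiting value $\delta s^*/(1+\alpha s^*)$ with $s^*=k(2k+1)\sqrt{\xi^2-1}$, under the change of variable $s_n=t_n/(\delta-\alpha t_n)$). The only substantive differences are streamlinings: you get by with the one-sided bound $\epsilon_n\ge \tau^*/N$ via monotonicity of the weight ratio in both $t$ and $\epsilon^2$, whereas the paper proves the exact identity $\epsilon_n=\varepsilon_n/(k(2k+1))$ to obtain a clean one-dimensional recursion for $s_n$; and your observation that the $(K+1)$st largest residual magnitude equals $|t_{n+1}|$ regardless of how $|\delta-\alpha t_{n+1}|$ compares with it removes the paper's auxiliary step $s_1<1$ and its final limit computation.
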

\begin{proof}
We first prove the inequalities in \eqref{eq:ex2 ineq}.
The first inequality follows since 
	\begin{equation*}
		\begin{aligned}
			\alpha > 1 & \Longleftarrow \nu^2 < \left(\frac{k}{k+1}\right)^2 \\
			& \Longleftrightarrow (k+1)^2\left(1 + \frac{1}{4k^2(2k+1)^2} \right) > k^2\left(1 + \frac{1}{k^2(2k+1)^2} \right) \\
			& \Longleftrightarrow 2k+1+\frac{(k+1)^2}{4k^2(2k+1)^2} > \frac{1}{(2k+1)^2}.
		\end{aligned}
	\end{equation*}
The second inequality in \eqref{eq:ex2 ineq}
follows directly from the fact that $\gamma \geq \nu$.
The third inequality in \eqref{eq:ex2 ineq} follows since
	\begin{equation*}
		\begin{aligned}
			\gamma/\left(k(2k+1)\sqrt{\xi^2 - 1}\right)>1 & \Longleftrightarrow \xi^2 < 1 + \frac{\gamma^2}{k^2(2k+1)^2}\\
			& \Longleftrightarrow \gamma^2 < 1.
		\end{aligned}
	\end{equation*}

Note that the third inequality in \eqref{eq:ex2 ineq} implies that
\[
\delta[\alpha + \gamma/(k(2k+1)\sqrt{\xi^2 - 1})]^{-1}< \delta(\alpha + 1)^{-1}
\]
so that $x_1^0$ is well defined.

We establish the result by showing that
$z^n_1 \nrightarrow z^*_1$. 
 Observe that 
 \begin{equation}\label{eq:bees}
 \begin{aligned}
b_{k^2+1} &= (A_\gamma z^*)_{k^2 + 1} = z^*_1  \text{ and }\   \\
b_1& =\alpha z^*_1 + \delta=\alf b_{k^2 + 1} + \del.
\end{aligned}
 \end{equation}
 By the $\ell_1$R-IRLS algorithm, $z^{n+1}$ solves the 
 least-squares problem
\[
\begin{aligned}
\min_z \  
\frac{k(\alpha z_1 - b_1)^2}{\sqrt{(b_1 - \alpha z^n_1)^2 + \epsilon_n^2}} 
&+ 
\frac{(k+1)(z_1 - b_{k^2+1})^2}{\sqrt{(z^n_i - b_{k^2+1})^2 + \epsilon_n^2}}\\
&+
(2k+1)\sum_{i=2}^k \frac{(z_i - b_i)^2}{\sqrt{(z^n_i - b_i)^2 + \epsilon_n^2}} .
\end{aligned}
\]
Due to the separability of the objective in the variables $z_i,\ i=2,\dots, k$, we have
$z^n_i = b_i = z^*_i,\ i = 2,\dots,k$, for $n \geq 1$. 
The optimality conditions for each subproblem tells us that
\begin{equation}\label{counter_example_1}
z^{n+1}_1 = \frac{\frac{\alpha b_1 k}{\sqrt{(b_1 - \alpha z^n_1)^2 +\epsilon_n^2}} + \frac{(k+1) b_{k^2+1} }{\sqrt{(z^n_1 - b_{k^2+1})^2 + \epsilon_n^2}}}{ \frac{k\alpha^2}{\sqrt{(b_1 - \alpha z^n_1)^2 +\epsilon_n^2}} + \frac{(k+1)}{\sqrt{(z^n_1 - b_{k^2+1})^2 + \epsilon_n^2}}}\ .
\end{equation}
By \eqref{counter_example_1}, we have
\begin{equation}
	\begin{aligned}\label{eq:appendix3}
		z^{n+1}_1 - b_{k^2+1} & = \frac{\frac{\alpha k (b_1 - \alpha b_{k^2+1})}{\sqrt{(b_1 - \alpha z^n_1)^2 + \eps_n^2}}}{\frac{k\alpha^2}{\sqrt{(b_1 - \alpha z^n_1)^2 +\epsilon_n^2}} + \frac{(k+1)}{\sqrt{(z^n_1 - b_{k^2+1})^2 + \epsilon_n^2}}} \\
		& = \frac{\frac{\alpha k \delta}{\sqrt{(b_1 - \alpha z^n_1)^2 + \eps_n^2}}}{\frac{k\alpha^2}{\sqrt{(b_1 - \alpha z^n_1)^2 +\epsilon_n^2}} + \frac{(k+1)}{\sqrt{(z^n_1 - b_{k^2+1})^2 + \epsilon_n^2}}}  \\
		& \geq 0
	\end{aligned}
\end{equation}
and
\begin{equation}
	\begin{aligned}\label{eq:appendix4}
		b_1 - \alpha z^{n+1}_1 & = \frac{\frac{(k+1)(b_1 - \alpha b_{k^2+1})}{\sqrt{(z^n_1 - b_{k^2+1})^2 + \eps_n^2}}}{\frac{k\alpha^2}{\sqrt{(b_1 - \alpha z^n_1)^2 +\epsilon_n^2}} + \frac{(k+1)}{\sqrt{(z^n_1 - b_{k^2+1})^2 + \epsilon_n^2}}} \\
		& = \frac{\frac{(k+1)\delta}{\sqrt{(z^n_1 - b_{k^2+1})^2 + \eps_n^2}}}{\frac{k\alpha^2}{\sqrt{(b_1 - \alpha z^n_1)^2 +\epsilon_n^2}} + \frac{(k+1)}{\sqrt{(z^n_1 - b_{k^2+1})^2 + \epsilon_n^2}}}  \\
		&\geq 0.
	\end{aligned}
\end{equation}
Hence,
\begin{equation}\label{counter_example_2} 
\begin{aligned}
z^{n+1}_1 - b_{k^2+1} &\geq 0,\, b_1 - \alpha z^{n+1}_1 \geq 0,
\text{ and } \\
s_{n+1}
& = \gamma \sqrt{\frac{(z^n_1 - b_{k^2+1})^2 + \epsilon_n^2}{(b_1 - \alpha z^n_1)^2 + \epsilon_n^2}},
\quad \forall\,  n \geq 0,
\end{aligned}
\end{equation}
where $s_{n+1} :=   (z^{n+1}_1 - b_{k^2+1})/(b_1 - \alpha z^{n+1}_1)$.
	
If we let $\varepsilon_n := z^n_1 - b_{k^2+1}$, then 
$s_n= \varepsilon_n / (\delta - \alpha \varepsilon_n)$ by \eqref{eq:bees}. 
For $n=0$,  \eqref{eq:x0} tells us that
 \begin{equation}\label{counter_example_3}
\begin{aligned}
s_0 &= \frac{ \vareps_0}{\delta - \alpha \vareps_0} 
=\frac{1}{(\del/\vareps_0)-\alf} \\
& \in \left( \frac{k(2k+1)\sqrt{\xi^2 - 1}}{\gamma}, 1 \right).
\end{aligned}
\end{equation}

We now show by induction that 
\begin{equation}\label{eq:ex2 induction}
s_n > k(2k+1)\sqrt{\xi^2 - 1}\ \ \text{ and }\ \
\epsilon_n = \vareps_n/(k(2k+1))\qquad\forall \ n \geq 1\ .
\end{equation}
First consider $n = 1$.
Since $\epsilon_0 = 1$, the definition of $\vareps_0$ and $s_0$
in conjunction with \eqref{eq:bees} and  \eqref{counter_example_2} 
tell us that $s_1 = \gamma\sqrt{\frac{ \vareps_0^2 + 1}{(\delta - \alpha \vareps_0)^2 + 1}}$ and so,
by \eqref{counter_example_3}
\begin{equation}\label{counter_example_5}
	s_1 = \gamma\sqrt{\frac{ \vareps_0^2 + 1}{(\delta - \alpha \vareps_0)^2 + 1}} \leq \gamma < 1.
\end{equation}
Observe that
\[
(A_\gam z^n-b)_i=
\begin{cases}
\alf z^n_1-b_1,&\text{if } i\in\set{jk+1}{ j\in\{0,\dots,k-1\}},
\\
z^n_1-b_{k^2+1},&\text{if } i\in\set{jk+1}{ j\in\{k,\dots,2k\}},
\\
0,&\text{otherwise.}
\end{cases}
\]
Hence, since $(z^1_1 - b_{k^2+1})/(b_1 - \alpha z^1_1)=s_1 < 1$, 
the $(k+1)$th largest magnitude of the entries of $A_\gamma z^1 - b$ is $|z^1_1 - b_{k^2+1}|$
with $|z^1_1 - b_{k^2+1}|=z^1_1 - b_{k^2+1}$     by \eqref{counter_example_2}.
Thus $\epsilon_{1} = \min\left\{\epsilon_0, \frac{z^{1}_1 - b_{k^2+1}}{k(2k+1)}\right\}$. The given definitions and the inequality
$s_1<1$, yield
\[
z^1_1 - b_{k^2+1} = \varepsilon_1 = \frac{\delta s_1}{\alpha s_1 + 1} 
=\frac{\delta }{\alpha  + (1/s_1)}
\leq \frac{\delta}{\alpha + 1} \leq k(2k+1).
\]
Therefore, $\eps_1 = \frac{\varepsilon_1}{k(2k+1)}$, since 
$\eps_0=1$, which proves the second part of
\eqref{eq:ex2 induction} for $n=1$.  
To obtain the first part of \eqref{eq:ex2 induction} for $n=1$,
observe that
\begin{equation}\label{counter_example_6}
		\begin{aligned}
			\frac{s_1^2}{k^2(2k+1)^2} + 1 & = \frac{\gamma^2}{k^2(2k+1)^2} \frac{ \vareps_0^2 + 1}{(\delta - \alpha \vareps_0)^2 + 1} + 1 \\
			& \qquad \text{(by \eqref{counter_example_5})}
\\
			& \geq \frac{\gamma^2}{k^2(2k+1)^2} \frac{ \vareps_0^2}{(\delta - \alpha \vareps_0)^2} + 1 \\
			 & \qquad (\text{since $ \vareps_0 \leq \delta - \alpha \vareps_0$
			by \eqref{counter_example_3}}) 
\\
			& > \xi^2\, .  \\
			& \qquad  (\text{by lower bound in }\eqref{counter_example_3}) 
\\
		\end{aligned}
\end{equation}
	Thus, $s_1 > k(2k+1)\sqrt{\xi^2-1}$.
	
Assume $s_n > k(2k+1)\sqrt{\xi^2 -1}$ and $\epsilon_n = \frac{\varepsilon_n}{k(2k+1)}$. 
	Plugging $\epsilon_n = \frac{\varepsilon_n}{k(2k+1)}$ into \eqref{counter_example_2} gives
\begin{equation}\label{counter_example_7}
\begin{aligned}
		s_{n+1} &= \gam\sqrt{\frac{\vareps_n^2+\frac{\vareps_n^2}{k^2(2k+1)^2}}
		{(b_1-\alf z^n_1)^2+\frac{\vareps_n^2}{k^2(2k+1)^2}}}
		\\ & =
		\gam\sqrt{1+\frac{1}{k^2(2k+1)^2}}
		\sqrt{\frac{\vareps_n^2}
		{(b_1-\alf z^n_1)^2+\frac{\vareps_n^2}{k^2(2k+1)^2}}}
		\\ & = \xi \frac{s_n}{\sqrt{1 + \frac{s_n^2}{k^2(2k+1)^2}}}.
\end{aligned}
\end{equation}
Since the function $f(x) := \frac{x}{\sqrt{1 + x^2(k^2(2k+1)^2)^{-1}}}$ is increasing on $(0, \infty)$, 
we know
	\[
	s_{n+1} = \xi f\left(s_n\right) \geq \xi f\left(k(2k+1)\sqrt{\xi^2 - 1}\right) = k(2k+1)\sqrt{\xi^2 - 1},
	\]
which established the first part of \eqref{eq:ex2 induction} for $n+1$.
To establish the second part, observe that \eqref{counter_example_7} 
and the induction hypothesis gives
	\[
		s_{n+1} = \xi \frac{s_n}{\sqrt{1+s_n^2(k^2(2k+1)^2)^{-1}}} \leq \xi \frac{s_n}{\sqrt{1 + \xi^2 - 1}} = s_n ,
	\]
	
Thus far, we have shown that 
$s_{n+1} \geq k(2k+1)\sqrt{\xi^2 - 1}$ 
and  
$s_{n+1} \leq s_n$.
By combining these inequalities with the fact that $s_n = \frac{\varepsilon_n}{\delta_n - \alpha  \vareps_n}$ for each $n \geq 1$, 
we have $\varepsilon_{n+1} \leq  \vareps_n$ for all $n\ge 1$. 
Therefore, by the induction hypothesis,
 $\epsilon_{n+1} = \min\{\epsilon_n, \frac{\varepsilon_{n+1}}{k(2k+1)}\} = \min\{ \frac{\varepsilon_{n}}{k(2k+1)},  \frac{\varepsilon_{n+1}}{k(2k+1)}\} = \frac{\varepsilon_{n+1}}{k(2k+1)}$. 
 This concludes the proof of \eqref{eq:ex2 induction}.
	 
Observe that our induction proof also shows that $\{s_n\}$ is a non-increasing sequence bounded below
	 by $k(2k+1)\sqrt{\xi^2 - 1}$. 
	 Therefore, there is an $s^* \geq k(2k+1)\sqrt{\xi^2 - 1} > 0$
	 such that $s_n \downarrow s^*$. In particular, by taking the limit in \eqref{counter_example_7}, 
	 we have
	\(
	s^* = \xi s^*/\sqrt{1+\frac{(s^*)^2}{k^2(2k+1)^2}},
	\)
or equivalently, $s^* = k(2k+1)\sqrt{\xi^2 - 1}$. 
The induction showed that $s_n=\vareps_n(\del-\alf\vareps_n)$
and so $\vareps_n=(\del s_n)/(1+\alf s_n)$ which tells us that
\[
\begin{aligned}
z^n_1-z^*_1&=z^n_1-b_{k^2+1} =
\vareps_n
=(\del s_n)/(1+\alf s_n)
\\ &\rightarrow
(\del s^*)/(1+\alf s^*)
=
\frac{k(2k+1)\sqrt{\xi^2-1}}{1+\alf k(2k+1)\sqrt{\xi^2-1}}>0 .
\end{aligned}
\]
Consequently, $z^n_1\not\rightarrow z^*_1$,
and we have arrive at the desired result.
\end{proof}	

\section{Numerical Examples}\label{sec:exp}


\subsection{Failure of the DDFG-IRLS Algorithm} 
We present three numerical experiments illustrating the failure of the DDFG-IRLS algorithm 
for small perturbations of the example
given in Theorem  \ref{thm:ex2}.
Experiment 1 (see Figure \ref{cs_counter}) simply illustrates the content of Theorem \ref{thm:ex2} 
for $k = 5$, $\gamma = \sqrt{(4k^2(2k+1)^2 + 1)/(4k^2(2k+1)^2 + 4)} = 0.999876$, $\delta = k(2k+1) = 55$. 
The true solution of problem \ref{pr:l1}, $z^*$, is sampled from $N(0, I_k)$. 
In both algorithms, $x^0$ is initialized as $x^0 := A_\gamma z^0 - b$ where $z^0$ satisfies \eqref{eq:x0}, i.e., 
$z^0_1 = z^*_1 + (\delta/(\alpha + \gamma / (k(2k+1) \sqrt{\xi^2-1 })  ) + \delta/(\alpha + 1))/2$. 
For Algorithm \ref{algorithm_cs}, $\eta = 0.9$.
\begin{figure}[H]
  \centering
  \begin{minipage}[b]{0.49\textwidth}
    \includegraphics[width=\textwidth]{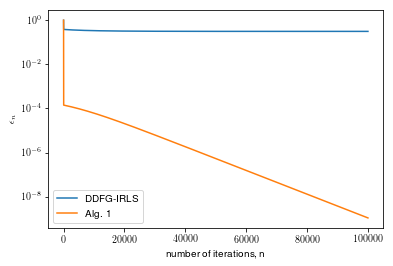}
  \end{minipage}
  \hfill
  \begin{minipage}[b]{0.50\textwidth}
    \includegraphics[width=\textwidth]{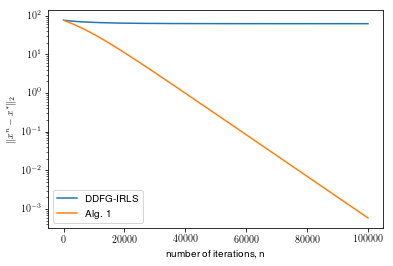}
  \end{minipage}
  \caption{Experiment 1: The performance of DDFG-IRLS versus Algorithm \ref{algorithm_cs} for the set-up in Theorem \ref{thm:ex2}. The left figure is $\eps_n$ versus the number of iterations $n$. The right figure is $\norm{x^n - x^*}_2$ versus the number of iterations $n$.}
 \label{cs_counter}
\end{figure}
In experiment 2 (see Figure \ref{cs_counter_2}), we examine the sensitivity of the success/failure of the DDFG-IRLS 
algorithm to the selection of the parameter $\gamma$ near the critical value 
$\gamma_0 := \sqrt{(4k^2(2k+1)^2 + 1)/(4k^2(2k+1)^2 + 4)} \approx 1-10^{-3.9}$. 
Again, we let $k = 5$. To illustrate the effect of the selection of $\gamma$, 
we run the DDFG-IRLS algorithm
for $\gamma \in \{1-10^{-1}, 1-10^{-2}, 1-10^{-3}, 1-10^{-3.3}, 1-10^{-3.6}, 1-10^{-\gamma_0}, 1-10^{-4}, 1-10^{-5}\}$. 
Here,  20 instances of the random variable $N(0, 100 \cdot I_5)$ are chosen for the starting point $z_0$. 
All other parameters are the same 
as those of experiment 1. The iterations are terminated when either $\norm{x^n - x^*} \leq 10^{-3}$ or the number of iterations exceeds $10^5$. 
In the range $1-10^{-3.6}\le \gam<\gam_0$, all the experiments fail to 
achieve the termination criteria $\norm{x^n - x^*}_2 \leq 10^{-3}$. 
This illustrates the extremely slow rate of
convergence of the DDFG-IRLS algorithm when the critical value $\gam_0$ is approached from below.

In experiment 3 (see Figure \ref{cs_counter_2}), we examine the robustness of the success/failure of the 
DDFG-IRLS algorithm for small perturbations of the example
given in Theorem  \ref{thm:ex2}
obtained by
perturbing the matrix $A_\gamma$. Again, we let $k = 5$ and $\delta = k(2k+1) = 55$ and use DDFG-IRLS to solve perturbed versions of our basic example with $A_{\gamma, \sigma} = A_\gamma + \sigma \mathcal{R}$, 
where $\mathcal{R} \in \mathbb{R}^{k(2k+1) \times k}$ is a random matrix with i.i.d. $N(0, 1)$ entries and  $b_{\sigma}:= A_{\gamma,\sigma} z^* + \delta \te$, where
$\te:=\sum_{j=0}^{k-1}e_{(jk+1)}$ with each $e_{(jk+1)}$ the ${(jk+1)}$th standard unit coordinate vector. 
As in experiment 2, the entries of vector $z^*$ are realizations of 
i.i.d. $N(0, 1)$ random variables. 
For each $\sigma \in [10^{-1}, 10^{-2}, 10^{-3}, 10^{-4}]$, construct $50$ problems with the entries of $\mathcal{R}$ i.i.d. 
 $N(0, 1)$. The DDFG-IRLS algorithm is run on all $50$ problems with each run of the algorithm initialized 
 at a $z^0$ with components selected i.i.d. $N(0, 100)$. 
 The algorithm is terminated when either $\norm{x^k - x^*}_2 < 10^{-3}$ or the number of iterations exceed $10^5$. 
 The results are presented on the right hand side of Figure \ref{cs_counter_2}. 
 Each point with coordinates $(x, y)$ represents the experiment with $\sigma = 10^{-y}$ 
 terminated after $x$ iterations. 
 When $\sigma = 10^{-4}$, the DDFG-IRLS algorithm fails to recover the true $x^*$ within $10^5$ steps for all the $50$ problems. 
 In other words, the failure of DDFG-IRLS is robust to a small random normal perturbation of matrix $A_\gamma$
 and when it does succeed for slightly large perturbations of $A_\gam$ the convergence is still quite slow.
\begin{figure}[H]
  \centering
  \begin{minipage}[b]{0.49\textwidth}
    \includegraphics[width=\textwidth]{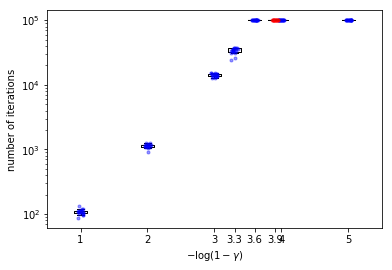}
  \end{minipage}
  \hfill
  \begin{minipage}[b]{0.50\textwidth}
    \includegraphics[width=\textwidth]{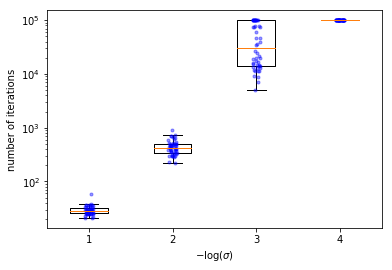}
  \end{minipage}
  \caption{Experiment 2 is presented in the left figure. The red points represents the convergence result for $\gamma = \sqrt{(4k^2(2k+1)^2 + 1)/(4k^2(2k+1)^2 + 4)} = 1-10^{-3.9}$. Experiment 3 is presented in the right figure. In each experiment, every point is a single trial with the corresponding parameter ($\gamma$ and $\sigma$ respectively).}
 \label{cs_counter_2}
\end{figure}
\subsection{Comparison of DDFG-IRLS and Algorithm \ref{algorithm_cs}}\label{sec:num}
In practice the DDFG-IRLS algorithm and Algorithm \ref{algorithm_cs} 
have nearly identical performance on randomly generated problems.
We illustrate this with two additional numerical experiments.

 In experiment 4, the entries of $\Phi \in \mathbb{R}^{300 \times 500}$ are chosen to be i.i.d. $N(0, 1)$ with the solution $x_* \in \mathbb{R}^{500}$ chosen
 so that the first $100$ entries are independent samples from $N(0, 1)$ and the 
 remaining components are taken to be $0$. 
 Set $y := \Phi x_*$. In practice, the NSP parameters $K$ and $\gam$ are not
 known even though they appear explicitly in the updating policy for the smoothing
 parameter $\eps_k$. All that is known is that if the NSP holds,
 then $K< N/2$ and $\gam\in(0,1)$. In this regard, it may be that the 
 DDFG-IRLS algorithm has an edge over Algorithm \ref{algorithm_cs} since the
 performance of Algorithm \ref{algorithm_cs} may be sensitive to the
 choice of $\gamma$. 
Consequently, in this experiment, 
we examine the robustness of the performance of both algorithms
to an ad hoc choice of the NSP parameters $K$ and $\gam$.
For each $K \in \{99, 100, 150, 200, 250, 300\}$ and $\gamma \in \{0.1, 0.5, 0.9\}$, we run Algorithm \ref{algorithm_cs} one hundred times with a
random initialization 
$x_0\sim N(0, 100 \cdot I_5)$ on each run. 
For each of these values of $K$, 
we also run the DDFG-IRLS algorithm for $100$ times with same random initializations 
$x_0\sim N(0, 100 \cdot I_5)$. The results are presented in Figure \ref{experiment_3}. The plot tells us that the success of both
algorithms is robust with respect to the choice of $K$.
When $K$ is strictly smaller than  the true number of the nonzero entries in the solution, 
both algorithms fail regardless of the choice of $\gamma$. 
On the other hand, if we take $K = 250=N/2$ or $K = 300$, both algorithms
succeed.
In addition, the two algorithms have nearly identical performance regardless of
the choice of $K$ when $\gamma$ is chosen to be $0.9$. 
Overall, a degradation in the performance of Algorithm \ref{algorithm_cs} for 
the smaller
values of $\gamma$ only occurs when $K$ is poorly chosen.
In practice, we recommend choosing $K$ be a half of the columns of the measurement matrix $\Phi$ and set $\gamma\approx 0.9$.
In this case, our experiment  indicates that the performance of the two algorithms is essentially identical. 
  \begin{figure}[H]
	\centering
	\includegraphics[width = 0.5\textwidth]{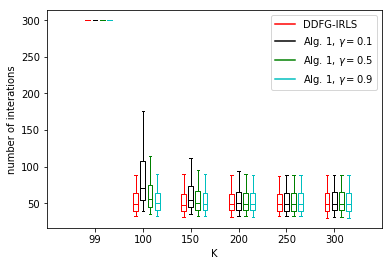}
	\caption{Experiment 4: Each box represent $100$ times runs of the algorithm.} 
	\label{experiment_3}
\end{figure}

In the final numerical experiment 5, we briefly
examine the efficiency of the DDFG-IRLS algorithm
and Algorithm \ref{algorithm_cs} in solving problems with randomly generated 
data. In this experiment we use the fixed parameter setting 
$(K,\gam,\eta)=(N/2,0.9,0.9)$ with $(N,m)=(500,300)$. 
In all of these experiments, the entries of $\Phi$ are 
independent samples from $N(0, 1)$.
In all experiments, the first $k$ entries of $x^*$ are i.i.d. sampled from $N(0, 100)$ with remaining
entries set to zero.
In Figure \ref{pr_evidence1},
$k=100$, $k=120$ in Figure~\ref{pr_evidence2}, and $k=50$ in Figure~\ref{pr_evidence3}.
The experiment is repeated $50$ times for each algorithm. 
The top panel of Figure \ref{pr_evidence1} shows percentage of problems solved versus the number
of iterations, with an iteration max of 12. 
The bottom panel of Figure \ref{pr_evidence} shows the average error
$(1/50)\sum_{i=1}^{50}\text{error}^k_i$ where $\text{error}^k_i$ is the value of $\norm{x^k-x^*}$
in the i$^{\text{th}}$ trial. 
Figures~\ref{pr_evidence2} and~\ref{pr_evidence3}
show the percentage of problems solved versus the number
of iterations for their respective $k$ values
\begin{figure}[H]
    \includegraphics[width=0.49\textwidth]{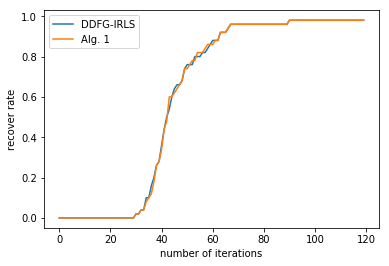}
    \includegraphics[width=0.49\textwidth]{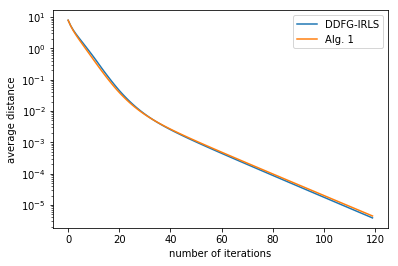}
    \caption{ \label{pr_evidence1} Recovery rate (top panel) and average distance (bottom panel) vs. iterations for $k = 100$. The bottom 
    figure illustrates the linear rate of convergence of the algorithms.}
    \end{figure}
    \begin{figure}[H]
    \includegraphics[width=0.49\textwidth]{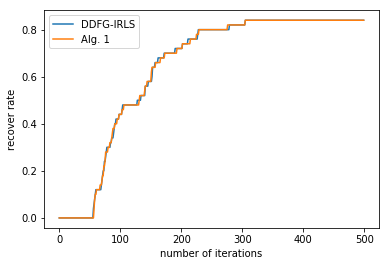}
    \caption{ \label{pr_evidence2} Recovery rate vs. iterations for $k = 50$.}
\end{figure}
\begin{figure}
    \includegraphics[width=0.49\textwidth]{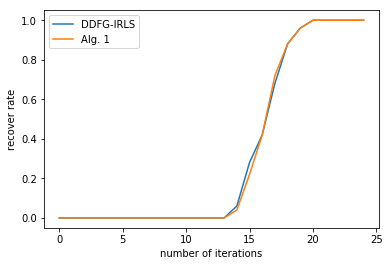}
\caption{ \label{pr_evidence3} Recovery rate vs. iterations for $k = 120$.}
\end{figure}

The DDFG-IRLS and Algorithm \ref{algorithm_cs} perform essentially the same in
these random experiments. The number of iterations required depends on the sparsity
of the solution with the iteration count decreasing with the sparsity $k$. This indicates that these 
algorithms are most useful when the underlying solution is sparse. Finally, the bottom 
panel of Figure \ref{pr_evidence1} demonstrates the linear rate of convergence
of these methods.

\section{Discussion}
In this contribution we provide a concrete example where the DDFG-IRLS fails when $k=K$,
and provide a remedy by changing the updating strategy for the smoothing parameter $\eps_n$.
This remedy increases the range of values for both $k$ and $\gam$ for which
the algorithm provably converges  with a local linear rate to the largest possible intervals 
$[1,K]$ and $(0,1)$ for $k$ and $\gam$, respectively. We have also shown through our numerical experiments
that on randomly generated problems both algorithms are robust to the choice of $K$ and $\gam$
and that their performance is essentially identical. Therefore, if one is concerned about the possible
failure DDFG-IRLS, then Algorithm \ref{algorithm_cs} should be considered with recommended parameter
choices $(K,\eta,\gam)=(N/2,0.9.0.9)$, or equivalently, $0.05\le\eta(1-\gam)\le 0.09$ since knowledge
of the product $\eta(1-\gam)$ is all that is required for implementation.

\bibliographystyle{plain}
\bibliography{cs.bib}

\begin{thebibliography}{10}

\bibitem{ABDFR19}
A.~Aravkin, J.V. Burke, D.~Drusvyatskyi, M.~Friedlander, and S.~Roy.
\newblock Level-set methods for convex optimization,.
\newblock {\em Mathematical Programming, Series B}, 174.

\bibitem{beck2009fast}
Amir Beck and Marc Teboulle.
\newblock A fast iterative shrinkage-thresholding algorithm for linear inverse
  problems.
\newblock {\em SIAM journal on imaging sciences}, 2(1):183--202, 2009.

\bibitem{boyd2011distributed}
Stephen Boyd, Neal Parikh, Eric Chu, Borja Peleato, Jonathan Eckstein, et~al.
\newblock Distributed optimization and statistical learning via the alternating
  direction method of multipliers.
\newblock {\em Foundations and Trends{\textregistered} in Machine learning},
  3(1):1--122, 2011.

\bibitem{BCWW2015}
J.V. Burke, F.~Curtis, H.~Wang, and J.~Wang.
\newblock Iteratively reweighted linear least squares for exact penalty
  subproblems on product sets.
\newblock {\em SIAM J.~Optim.}, 25:261--294, 2015.

\bibitem{candes2004robust}
Emmanuel Candes, Justin Romberg, and Terence Tao.
\newblock Robust uncertainty principles: Exact signal reconstruction from
  highly incomplete frequency information.
\newblock {\em arXiv preprint math/0409186}, 2004.

\bibitem{candes2005decoding}
Emmanuel Candes and Terence Tao.
\newblock Decoding by linear programming.
\newblock {\em arXiv preprint math/0502327}, 2005.

\bibitem{chen2001atomic}
Scott~Shaobing Chen, David~L Donoho, and Michael~A Saunders.
\newblock Atomic decomposition by basis pursuit.
\newblock {\em SIAM review}, 43(1):129--159, 2001.

\bibitem{cohen2009compressed}
Albert Cohen, Wolfgang Dahmen, and Ronald DeVore.
\newblock Compressed sensing and best k-term approximation.
\newblock {\em Journal of the American mathematical society}, 22(1):211--231,
  2009.

\bibitem{daubechies2010iteratively}
Ingrid Daubechies, Ronald DeVore, Massimo Fornasier, and C~Sinan
  G{\"u}nt{\"u}rk.
\newblock Iteratively reweighted least squares minimization for sparse
  recovery.
\newblock {\em Communications on Pure and Applied Mathematics: A Journal Issued
  by the Courant Institute of Mathematical Sciences}, 63(1):1--38, 2010.

\bibitem{donoho2003optimally}
David~L Donoho and Michael Elad.
\newblock Optimally sparse representation in general (nonorthogonal)
  dictionaries via $\ell_1$ minimization.
\newblock {\em Proceedings of the National Academy of Sciences},
  100(5):2197--2202, 2003.

\bibitem{donoho2006compressed}
David~L Donoho et~al.
\newblock Compressed sensing.
\newblock {\em IEEE Transactions on information theory}, 52(4):1289--1306,
  2006.

\bibitem{donoho1989uncertainty}
David~L Donoho and Philip~B Stark.
\newblock Uncertainty principles and signal recovery.
\newblock {\em SIAM Journal on Applied Mathematics}, 49(3):906--931, 1989.

\bibitem{gabay1975dual}
Daniel Gabay and Bertrand Mercier.
\newblock {\em A dual algorithm for the solution of non linear variational
  problems via finite element approximation}.
\newblock Institut de recherche d'informatique et d'automatique, 1975.

\bibitem{gorodnitsky1997sparse}
Irina~F Gorodnitsky and Bhaskar~D Rao.
\newblock Sparse signal reconstruction from limited data using focuss: A
  re-weighted minimum norm algorithm.
\newblock {\em IEEE Transactions on signal processing}, 45(3):600--616, 1997.

\bibitem{gribonval2003sparse}
R{\'e}mi Gribonval and Morten Nielsen.
\newblock Sparse representations in unions of bases.
\newblock {\em IEEE transactions on Information theory}, 49(12):3320--3325,
  2003.

\bibitem{lawson1961contribution}
Charles~Lawrence Lawson.
\newblock Contribution to the theory of linear least maximum approximation.
\newblock {\em Ph. D. dissertation, Univ. Calif.}, 1961.

\bibitem{Lin2009}
Qiuying Lin.
\newblock {\em Sparsity and Nonconvex Nonsmooth Optimization}.
\newblock PhD thesis, University of Washington, Seattle, WA, 2009.

\bibitem{lustig2005application}
Michael Lustig, Juan~M Santos, Jin-Hyung Lee, David~L Donoho, and John~M Pauly.
\newblock Application of compressed sensing for rapid mr imaging.
\newblock {\em SPARS,(Rennes, France)}, 2005.

\bibitem{osborne2000new}
Michael~R Osborne, Brett Presnell, and Berwin~A Turlach.
\newblock A new approach to variable selection in least squares problems.
\newblock {\em IMA journal of numerical analysis}, 20(3):389--403, 2000.

\bibitem{osborne1985finite}
Michael~Robert Osborne.
\newblock {\em Finite algorithms in optimization and data analysis}.
\newblock Wiley New York, 1985.

\bibitem{oleary1990}
Dianne~P. O’Leary.
\newblock Robust regression computation using iteratively reweighted least
  squares.
\newblock {\em SIAM Journal on Matrix Analysis and Applications},
  11(3):466--480, 1990.

\bibitem{takhar2006new}
Dharmpal Takhar, Jason~N Laska, Michael~B Wakin, Marco~F Duarte, Dror Baron,
  Shriram Sarvotham, Kevin~F Kelly, and Richard~G Baraniuk.
\newblock A new compressive imaging camera architecture using optical-domain
  compression.
\newblock In {\em Computational Imaging IV}, volume 6065, page 606509.
  International Society for Optics and Photonics, 2006.

\bibitem{wright2009sparse}
Stephen~J Wright, Robert~D Nowak, and M{\'a}rio~AT Figueiredo.
\newblock Sparse reconstruction by separable approximation.
\newblock {\em IEEE Transactions on Signal Processing}, 57(7):2479--2493, 2009.

\end{thebibliography}

\end{document}